\begin{document}

\title{The Width and Integer Optimization on Simplices With Bounded Minors of the Constraint Matrices
}

%\titlerunning{Short form of title}        % if too long for running head

\author{D. V. Gribanov        \and
        A. Y. Chirkov %etc.
}

%\authorrunning{Short form of author list} % if too long for running head

\institute{D. V. Gribanov \at Lobachevsky State University of Nizhny Novgorod, 23 Gagarina Avenue, Nizhny Novgorod, Russian Federation, 603950,\\
National Research University Higher School of Economics, 136 Rodionova, Nizhny Novgorod, Russian Federation, 603093,\\
\email{dimitry.gribanov@gmail.com}
\and
A. Y. Chirkov \at Lobachevsky State University of Nizhny Novgorod, 23 Gagarin Avenue, Nizhny Novgorod, Russian Federation, 603950,\\ \email{chir7@yandex.ru}
}

\date{Received: date / Accepted: date}
% The correct dates will be entered by the editor

\maketitle

\begin{abstract}
In this paper, we will show that the width of simplices defined by systems of linear inequalities can be computed in polynomial time if some minors of their constraint matrices are bounded. Additionally, we present
some quasi-polynomial-time and polynomial-time algorithms to solve the integer linear optimization problem defined on simplices minus all their integer vertices assuming that some minors of the constraint matrices of the simplices are bounded.

\keywords{Integer Programming \and Polytope \and Unimodular Decomposition \and Width \and Flatness Theorem \and Matrix Minors \and Efficient Algorithm}
% \PACS{PACS code1 \and PACS code2 \and more}
% \subclass{MSC code1 \and MSC code2 \and more}
\end{abstract}

\section{Introduction}
Let $A$ be a $m\times n$ integral matrix. Its $ij$-th element is
denoted by $A_{i\,j}$, $A_{i\,*}$ is $i$-th row of $A$, and $A_{*\,j}$ is $j$-th column of $A$.
Additionally, for subsets $I \subseteq \{1,\dots,m\}$ and $J \subseteq \{1,\dots,n\}$, $A_{I\,J}$ denotes the submatrix of $A$ that was generated by all rows with numbers in $I$ and all columns with numbers in $J$. Sometimes, we will change the symbols $I$ and $J$ to the symbol $*$ meaning that we take the set of all rows or columns, respectively. For a vector $b\in\mathbb{Z}^{n}$, by $P(A,b)$ we denote a polyhedron
$\{ x \in \mathbb{R}^{n} : A x \leq b\}$. The set of all vertices of a polyhedron $P$ is denoted by $vert(P)$.

Let $r(A)$  be the rank of an integral matrix $A$.
Let $\Delta_k(A)$ and $\delta_k(A)$ denote the greatest and the smallest absolute values of the determinants of all $k \times k$ sub-matrices of $A$, respectively.
Let $\Delta_{gcd}(k,A)$ and $\Delta_{lcm}(k,A)$ be the greatest common divisor and the least common multiple of the determinants of all $k \times k$ sub-matrices of $A$, respectively. Additionally, let $\Delta(A) = \Delta_{r(A)}(A)$, $\delta(A) = \delta_{r(A)}(A)$, $\Delta_{gcd}(A) = \Delta_{gcd}(r(A),A)$, and $\Delta_{lcm}(A) = \Delta_{lcm}(r(A),A)$.

Sometimes, we will use the symbol $\tilde O$ instead of the symbol $O$ for an estimation of the complexity. It means that we ignore the logarithmic factor in a complexity bound, i.e. $\tilde O(f(n))=\{g(n)|~\exists c>0~g(n)=O(f(n)log^c(f(n)))\}$.

\begin{definition}
For a matrix $B \in \mathbb{R}^{s \times n}$, $cone(B) = \{B t : t \in \mathbb{R}_+^{n} \}$ is the \emph{cone spanned by
columns of} $B$ and $conv(B) = \{B t : t \in
\mathbb{R}_+^{n},\, \sum_{i=1}^{n} t_i = 1  \}$ is the
\emph{convex hull spanned by columns of} $B$.
\end{definition}

\begin{definition}
For $v \in vert(P(A,b))$, let $N(v) = cone(A_{J(v)\,*}^\top)$, where $J(v) = \{j :\: A_{j\,*} v = b_j \}$.
The set $N(v)$ is called the \emph{normal cone of the vertex} $v$.
\end{definition}

\begin{definition}
The \emph{width of a convex body} $P$ is defined as
$ w(P)=\min\limits_{c \in \mathbb{Z}^n\setminus\{0\}} (\max\limits_{x \in P} c^\top x - \min\limits_{x \in P} c^\top x)$.
A vector $c$ minimizing the difference $\max\limits_{x \in P} c^\top x - \min\limits_{x \in P} c^\top x$ on $\mathbb{Z}^n\setminus\{0\}$
is called the \emph{flat direction of} $P$.
\end{definition}

We will use the classical flatness theorem of Khinchine \cite{KHI48}. Let $P$ be a convex body. Khinchine shows that if $P \cap \mathbb{Z}^n = \emptyset$, then $w(\,P\,) \leq f(n)$, where $f(\cdot)$ is a concrete function. There are many estimates on $w(P)$ in the papers \cite{BLPS99,BAN96,DAD,RUD00}. There is a conjecture claiming that $f(n) = O(n)$. The best known upper bound on $f(n)$ is $O(n^{4/3}\, log^c(n))$ due to Rudelson \cite{RUD00}, where $c$ is some constant that does not depend on $n$.

An interesting problem is to estimate the width of empty lattice simplices \cite{BLPS99,HAAZ00,KANT97,SEB99}.

\begin{definition}
A simplex $S$ is called \emph{empty lattice} if $vert(S) \subseteq \mathbb{Z}^n$ and $(S \cap \mathbb{Z}^n )\setminus vert(S) = \emptyset$.
\end{definition}

The best known estimate for empty lattice simplices is $f(n)=O(n\,log(n))$ \cite{BLPS99}.

In our previous papers \cite{GRIB13,GRIB16}, we have proved the following analogues of the flatness theorem for polytopes.

\begin{theorem}
Let $A \in \mathbb{Z}^{m \times n}$, $b \in \mathbb{Z}^m$, and $P = P(A,b)$ be a polytope. If $w(P) > (n+1)\Delta(A) \cfrac{\Delta_{lcm}(A)-1}{\Delta_{gcd}(A)}$, then $|P \cap \mathbb{Z}^n| \geq n+1$. Moreover, there is a polynomial-time algorithm to find an element of $P \cap \mathbb{Z}^n$.
\end{theorem}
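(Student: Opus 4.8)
The plan is to argue directly at a vertex of $P$ and translate the problem of finding integer points into a lattice-coset problem, where the minors of $A$ control both the determinant of the relevant lattice and the spacing of the coset points.

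First I would fix a vertex $v$ of $P$ together with $n$ linearly independent active constraints $J$, so that $B := A_{J\,*}$ is invertible with $|\det B| \le \Delta(A)$ and $v = B^{-1} b_J$. Passing to the slack coordinates $s = b_J - B x$ turns the local cone at $v$ into the nonnegative orthant: one has $x = B^{-1}(b_J - s)$, the image $\phi(P) := \{\, b_J - B x : x \in P \,\}$ lies in $\mathbb{R}_+^n$ and contains the origin, and $x \in \mathbb{Z}^n$ holds exactly when $s$ lies in the lattice coset $\mathcal{L} := b_J + B\mathbb{Z}^n$. Since $c^\top x = c^\top v - (B^{-\top} c)^\top s$, taking the integer direction $c = A_{j\,*}$ shows that the extent of $\phi(P)$ along the axis $e_j$ equals $w_{A_{j\,*}}(P) \ge w(P)$; indeed the whole width $w(P)$ equals the lattice width of $\phi(P)$ with respect to $\Lambda := B\mathbb{Z}^n$. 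Thus it suffices to produce at least $n+1$ points of the coset $\mathcal{L}$ inside $\phi(P)$.

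The geometric core is that for a simplex $\phi(P)$ is exactly the corner simplex $conv(0, h_1 e_1, \dots, h_n e_n)$ with $h_j = w_{A_{j\,*}}(P) \ge w(P)$, so it contains the scaled standard simplex $\{\, s \ge 0 : \sum_j s_j \le w(P)\,\}$. It then remains to count coset points $s \in \mathcal{L}$ with $s \ge 0$ and $\sum_j s_j \le w(P)$, each of which yields a distinct integer point $x = B^{-1}(b_J - s) \in P$. I would analyse these along the coordinate directions, where $s = t\,e_j \in \mathcal{L}$ amounts to $\tfrac{t}{D}\,\mathrm{adj}(B)\,e_j \equiv v \pmod{\mathbb{Z}^n}$ with $D = |\det B|$; this is periodic in $t$ with a period governed by $D$ and by the gcd of the entries of the relevant adjugate column (a vector of $(n-1)$-minors of $B$). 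Relating these per-direction periods and initial offsets to the global quantities $\Delta_{gcd}(A)$ and $\Delta_{lcm}(A)$, and summing the guaranteed hits over the $n$ axes together with the central point, is what should force at least $n+1$ coset points once $w(P) > (n+1)\Delta(A)\frac{\Delta_{lcm}(A)-1}{\Delta_{gcd}(A)}$.

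The main obstacle is precisely this number-theoretic step: converting the local periods and offsets, which live in $\mathbb{Z}^n/B\mathbb{Z}^n$ and depend on the single matrix $B$, into the uniform bound phrased through $\Delta_{gcd}(A)$ and $\Delta_{lcm}(A)$ over all maximal minors, and in particular accounting for the $-1$ in the numerator and the $(n+1)$ factor. A secondary difficulty is passing from the clean simplex picture to an arbitrary polytope: at a non-simplicial vertex the corner simplex $conv(0,\ell_1 e_1,\dots,\ell_n e_n)$ sitting inside $\phi(P)$ may have edge lengths $\ell_j$ strictly smaller than $w(P)$, so one must either inscribe a suitable simplex whose coordinate widths still dominate $w(P)$ or reduce the general case to the simplicial one. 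Finally, for the algorithmic claim I would observe that every construction above -- the vertex $v$ and the matrix $B$, the slack transform, and the search for the first coset point along a coordinate ray -- is explicit and reduces to linear algebra over $\mathbb{Z}$ together with a gcd computation, hence runs in polynomial time and in particular locates one element of $P \cap \mathbb{Z}^n$.
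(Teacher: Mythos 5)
Note first that this theorem is not proved in the paper at all: it is imported from \cite{GRIB13,GRIB16}, so there is no in-text proof to compare against, and your proposal has to be judged on its own terms. Judged so, it has a genuine gap at its central counting step. After passing to slack coordinates $s=b_J-Bx$ you propose to count points of the coset $\mathcal{L}=b_J+B\mathbb{Z}^n$ lying \emph{on} the coordinate axes, i.e.\ of the form $s=te_j$. But $\mathcal{L}$ is a coset of a full-rank lattice, and such a coset generically meets no coordinate axis at all: already for $B=2I_n$ and $b_J=\mathbf{1}$ the condition $te_j\in\mathbf{1}+2\mathbb{Z}^n$ forces $0\equiv 1\pmod 2$ in every coordinate other than the $j$-th, so there are no ``hits'' on any axis for any $t$, and the sum of your guaranteed hits is zero. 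The congruence $\frac{t}{D}\,\mathrm{adj}(B)e_j\equiv v\pmod{\mathbb{Z}^n}$ is solvable only when $v$ lies in the cyclic subgroup of $B^{-1}\mathbb{Z}^n/\mathbb{Z}^n$ generated by $B^{-1}e_j$, which is a genuine solvability restriction, not merely a question of periods and offsets. A workable version must instead locate coset points \emph{near} the corner (for instance, reducing $b_J$ modulo a Hermite-normal-form basis of $B\mathbb{Z}^n$ yields one point $s^0$ with $0\le s^0_i<D$) and then translate it by short lattice vectors of $B\mathbb{Z}^n$; that is a structurally different argument from the one you outline.

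Two further places where the plan is incomplete rather than wrong. First, the theorem concerns an arbitrary polytope $P(A,b)$, and your geometric core --- that $\phi(P)$ contains the corner simplex $\{s\ge 0:\sum_j s_j\le w(P)\}$ --- is established only when $P$ is a simplex; at a vertex of a general polytope the region $\phi(P)$ can be a thin sliver of the orthant even though each axis-extent is at least $w(P)$, and no reduction to the simplicial case is given. Second, the step you yourself flag as the main obstacle --- converting per-basis data into the uniform bound $(n+1)\Delta(A)\frac{\Delta_{lcm}(A)-1}{\Delta_{gcd}(A)}$, including the $-1$ and the factor $n+1$ --- is precisely where the content of the theorem lives; without it one obtains at best a statement with constants tied to a single basis $B$ rather than to $\Delta_{gcd}(A)$ and $\Delta_{lcm}(A)$. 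Only the algorithmic remark survives as stated: one integer point of the corner cone $\{x:Bx\le b_J\}$ can indeed be found in polynomial time by Hermite-normal-form reduction, but the claim that this point lies in $P$ again rests on the missing quantitative step.
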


\begin{theorem}\label{SimplexFlatTh}
Let $A \in \mathbb{Z}^{(n+1) \times n}$, $b \in \mathbb{Z}^{n+1}$, and $P = P(A,b)$ be a simplex. If $w(P) > \delta(A)$, then $P \cap \mathbb{Z}^n \not= \emptyset$.
Moreover, there is a polynomial-time algorithm to find an element of $P \cap \mathbb{Z}^n$.
\end{theorem}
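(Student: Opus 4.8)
\emph{Proof plan.} The plan is to pass to \emph{slack coordinates} and reduce the statement to a one-equation nonnegative integer representation problem. Since $P$ is a bounded full-dimensional simplex whose $n+1$ facets are the rows of $A$, every $n\times n$ minor of $A$ is nonzero; write $d_i=|\det A_{J(v_i)\,*}|$ for the minor obtained by deleting row $i$, so that $\delta(A)=\min_i d_i$ and $\Delta(A)=\max_i d_i$. Boundedness forces the unique (up to scaling) dependence $\sum_{i=0}^n\lambda_i A_{i\,*}=0$ to have all $\lambda_i$ of one sign; normalising to the primitive positive solution gives $\lambda_i=d_i/\Delta_{gcd}(A)$. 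For $x\in\mathbb{R}^n$ set $s_i(x)=b_i-A_{i\,*}x$, so that $x\in P$ iff $s(x)\ge0$, and the dependence yields the affine identity
\begin{equation}\label{eq:slack}
\sum_{i=0}^n \lambda_i\, s_i(x)=\beta,\qquad \beta:=\sum_{i=0}^n\lambda_i b_i,
\end{equation}
valid for all $x$. Evaluating at $v_i$ (where only $s_i$ is nonzero) gives $\beta=\lambda_i h_i$ with $h_i:=b_i-A_{i\,*}v_i>0$. As $A_{i\,*}\in\mathbb{Z}^n\setminus\{0\}$ is an admissible direction, and the functional $A_{i\,*}x$ attains its maximum $b_i$ on the facet opposite $v_i$ and its minimum at $v_i$, the width of $P$ in this direction is exactly $h_i$; hence $w(P)\le\min_i h_i$. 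With the hypothesis $w(P)>\delta(A)$ this gives $h_i>\delta(A)$ for every $i$, i.e. $\beta>\delta(A)\,\lambda_i$ for all $i$, so the available \emph{budget} satisfies $\beta>\delta(A)\,\Delta(A)/\Delta_{gcd}(A)$.

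For integral $x$ all $s_i(x)$ are integers, so it suffices to produce a nonnegative integer vector $s\ge0$ lying in the affine lattice $b-A\mathbb{Z}^n$ and satisfying \eqref{eq:slack}. Consider first the coprime case $\Delta_{gcd}(A)=1$. Here $A\mathbb{Z}^n$ is all of $\mathbb{Z}^{n+1}\cap\{\lambda^\top s=0\}$ (the index of $A\mathbb{Z}^n$ in this lattice equals $\Delta_{gcd}(A)$ by Cauchy--Binet, since $\det(A^\top A)=\sum_i d_i^2$), so the sublattice constraint is vacuous and the task is simply to represent $\beta$ as a nonnegative integer combination of the coins $\lambda_0,\dots,\lambda_n$. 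Let $\delta=\lambda_{i_0}=\delta(A)$ be a smallest coin. Since $\gcd_i\lambda_i=1$, the residues $\lambda_i\bmod\delta$ generate $\mathbb{Z}/\delta\mathbb{Z}$, and the forward orbit of $0$ under additions by these residues is all of $\mathbb{Z}/\delta\mathbb{Z}$ with diameter at most $\delta-1$; hence the residue $\beta\bmod\delta$ is reached using at most $\delta-1$ coins, of total value $V\le(\delta-1)\Delta(A)<\delta\,\Delta(A)<\beta$. Then $\beta-V$ is a positive multiple of $\delta$, which we pad with copies of the coin $\delta$, giving the desired representation; the construction is algorithmic. This settles the theorem when $\Delta_{gcd}(A)=1$, with the threshold $\delta(A)$ exactly as claimed.

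The remaining and genuinely delicate case is $\Delta_{gcd}(A)>1$, and I expect the coset condition it introduces to be the main obstacle: a nonnegative solution of \eqref{eq:slack} over $\mathbb{Z}^{n+1}$ need not lie in the correct coset $b-A\mathbb{Z}^n$, which sits with index $\Delta_{gcd}(A)$ in $\mathbb{Z}^{n+1}\cap\{\lambda^\top s=0\}$, so one must steer the representation into this sublattice. The natural device that keeps integrality of $x$ automatic is to work at a minimal-minor vertex $v_{i_0}$ with $d_{i_0}=\delta(A)$: writing $y=b_{J(v_{i_0})}-A_{J(v_{i_0})\,*}x\ge0$ for the tight slacks, integral $x$ correspond exactly to $y$ in a fixed residue class of $\mathbb{Z}^n/A_{J(v_{i_0})\,*}\mathbb{Z}^n$, a group of order exactly $\delta(A)$, while feasibility of facet $i_0$ reads $\sum_{j\ne i_0}\lambda_j y_j\le\beta$. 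The crux is then to exhibit a nonnegative representative of the target residue whose weighted cost meets the budget $\beta$: one must balance the small size $\delta(A)$ of this group against the fact that individual weights $\lambda_j$ may be as large as $\Delta(A)/\Delta_{gcd}(A)$, and it is precisely this balancing that must recover the sharp constant $\delta(A)$ rather than a weaker $\delta(A)\Delta(A)$-type bound. Finally, for the claimed polynomial running time the representative must be computed by an extended-Euclidean / Hermite-normal-form reduction of the cone at $v_{i_0}$ rather than by enumerating the $\delta(A)$ residues, since $\delta(A)$ may be exponential in the input size.
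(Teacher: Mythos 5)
Your reduction to slack coordinates is sound, and the part you completed checks out: the identity $\sum_i\lambda_i s_i(x)=\beta$ with $\lambda_i=d_i/\Delta_{gcd}(A)$, the observation $w(P)\le h_i$ so that the hypothesis forces $\beta>\delta(A)\,\lambda_i$ for every $i$, and the coin/orbit argument that settles the case $\Delta_{gcd}(A)=1$ are all correct. (The paper does not reprove this theorem here --- it imports it from \cite{GRIB13,GRIB16} --- so I am judging your argument on its own terms.) But the case $\Delta_{gcd}(A)>1$ is left as an announced difficulty rather than proved, and since that is the generic case the proof is genuinely incomplete; the polynomial-time claim is likewise only gestured at.

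You are, however, one observation away: the missing step is the \emph{same} orbit argument you already used, run in a different group. Work, as you propose, at a vertex $v_{i_0}$ with $|\det A_{J(v_{i_0})\,*}|=\delta(A)$, and set $M=A_{J(v_{i_0})\,*}$ and $y=b_{J(v_{i_0})}-Mx$. The quotient $G=\mathbb{Z}^n/M\mathbb{Z}^n$ has order $\delta(A)$ and is generated by the images of $e_1,\dots,e_n$; the sets $S_k$ of elements reachable from $0$ by at most $k$ additions of these generators strictly grow until they saturate, so every coset of $M\mathbb{Z}^n$ --- in particular the one containing $b_{J(v_{i_0})}$ --- has a nonnegative representative $y$ with $\sum_j y_j\le\delta(A)-1$. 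Then $\sum_{j\ne i_0}\lambda_j y_j\le(\delta(A)-1)\max_j\lambda_j<\delta(A)\max_j\lambda_j<\beta$, which is exactly your budget condition, and $x=M^{-1}(b_{J(v_{i_0})}-y)$ is an integer point of $P$. Your $\Delta_{gcd}(A)=1$ case is just the instance $G\cong\mathbb{Z}/\delta\mathbb{Z}$ of this, so no case split is needed and no coset steering is required. For the algorithmic claim, replace the breadth-first search (which is only pseudo-polynomial in $\delta(A)$, an issue already present in your coprime case) by a Hermite-normal-form reduction: writing $M\mathbb{Z}^n=H\mathbb{Z}^n$ with $H$ triangular, $H_{ii}>0$ and $0\le H_{ij}<H_{ii}$, the greedy reduction of $b_{J(v_{i_0})}$ modulo the columns of $H$ produces in polynomial time a representative with $0\le y_i<H_{ii}$, hence $\sum_i y_i\le\sum_i(H_{ii}-1)\le\prod_i H_{ii}-1=\delta(A)-1$, which meets the same budget.
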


In other words, we have shown that if a polyhedron is broad enough, then there is
its integer point and some its integer point can be found in polynomial time.
The estimate of the first theorem can be excessive for matrices with large $\Delta_{lcm}(A)$, but it can be better for some special classes of matrices, for example $k$-modular matrices \cite{KOT14}. In this paper, we also deal with minors
of the constraint matrices.

Seb\"{o} shown \cite{SEB99} that the problem of computing the width of simplices defined by systems of rational inequalities is NP-hard. In this paper, we show that this problem is polynomial in the case, when values of some minors of the constraint matrices are bounded.
We also show solvability of the integer programming problem in quasi-polynomial or polynomial time on "second-order simplices" defined by taking simplices minus their integer vertices assuming that some minors of the constraint matrices of the simplices are bounded.

The authors consider this paper as a part for achieving the general aim to find out critical values of parameters, when a given problem changes complexity. For example, the integer linear programming problem is polynomial-time solvable on polyhedrons with all-integer vertices, due to \cite{KHA80}, see also \cite{HGLOB95}. On the other hand, it is NP-complete in the class of polyhedrons with denominators of extreme points equal $1$ or $2$, due \cite{PAD89}. The famous $k$-satisfiability problem is polynomial for $k \leq 2$, but is NP-complete for all $k > 2$. A theory, when an NP-complete graph problem becomes easier, is investigated for the family of hereditary classes in the papers \cite{A,AKL,ABKL,M1,M2,KLMT,M3,M4,MP}.

\section{Unimodular cone decomposition}

A proof of the following theorem can be found in \cite{SHEV96}, and the very similar fact can be found in \cite{BARV96,BARVP99}.
\begin{theorem}
Let $A \in \mathbb{R}^{m \times n}$, $b \in \mathbb{R}^m$, and $b = A y$ for some $y \in \mathbb{R}^n$. Let $J_{+} = \{j :\: x_j > 0\}$ and $J_{-} = \{j :\: x_j < 0\}$. Then $cone(A) = \left(\bigcup\limits_{j \in J_{-}} cone(A[j,b])\right) \cap \left(\bigcup\limits_{j \in J_{+}} cone(A[j,b])\right)$, where $A[j,b]$ is the matrix obtained from $A$ by replacing $j$-th column with the column $b$.
\end{theorem}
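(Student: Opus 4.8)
The plan is to establish both inclusions of the set identity through a single column-exchange (pivoting) computation driven by the dependence $b=Ay$. Write $a_1,\dots,a_n$ for the columns of $A$ and set $J_0=\{j:\,y_j=0\}$. The one algebraic fact I would use throughout is that, since $b=\sum_k y_k a_k$, every point $x=\sum_i t_i a_i$ admits the one-parameter family of rewritings
$$x=\lambda\,b+\sum_{i=1}^{n}(t_i-\lambda y_i)\,a_i,\qquad \lambda\in\mathbb{R}.$$
Passing from a representation of a point in $cone(A)$ to a representation in some $cone(A[j,b])$ is therefore exactly the act of choosing $\lambda$ so that the coefficient $t_j-\lambda y_j$ of $a_j$ vanishes while every remaining coefficient, including the coefficient $\lambda$ of $b$, stays non-negative.

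For the inclusion $cone(A)\subseteq\bigcup_{j\in J_+}cone(A[j,b])$ I would take $x=\sum_i t_i a_i$ with $t\ge 0$ and choose the pivot index $j\in J_+$ that minimises the ratio $t_i/y_i$ over $i\in J_+$, with $\lambda=t_j/y_j$. By construction the coefficient of $a_j$ is zero; the coefficient $\lambda$ of $b$ is non-negative because $t_j\ge 0$ and $y_j>0$; and for $i\ne j$ the coefficient $t_i-\lambda y_i$ is non-negative, since for $i\in J_-\cup J_0$ this follows from $y_i\le 0$ and $\lambda\ge 0$, while for $i\in J_+$ it is exactly the minimality of the chosen ratio. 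Hence $x$ lies in $cone(A[j,b])$ for this $j\in J_+$.

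For the companion inclusion into $\bigcup_{j\in J_-}cone(A[j,b])$ and for the reverse inclusion $U_-\cap U_+\subseteq cone(A)$ (writing $U_\pm$ for the two unions), I would run the same rewriting in reverse: given $x\in cone(A[j,b])$ for some $j\in J_-$ and $x\in cone(A[j',b])$ for some $j'\in J_+$, I expand $b=\sum_k y_k a_k$ in both representations and combine them so as to cancel the two occurrences of $b$, aiming to exhibit $x$ as a non-negative combination of $a_1,\dots,a_n$. The sign pattern of $y$ on $J_+$ versus $J_-$ is what should force the surviving coefficients to be non-negative, and the degenerate columns indexed by $J_0$ together with the boundary cases $J_+=\emptyset$ or $J_-=\emptyset$ I would treat separately.

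The step I expect to be the main obstacle is the bookkeeping of the sign of the coefficient of $b$ that the pivot produces. On the $J_+$ side this sign is automatically correct, but the symmetric choice of pivot on the $J_-$ side yields $\lambda=t_j/y_j$ with $y_j<0$, so the coefficient of $b$ changes sign and its non-negativity is no longer free; tracking precisely how the orientation recorded by the signs of the entries of $y$ enters each $cone(A[j,b])$ is the delicate point on which the whole identity turns, and it is exactly there that the argument must be carried out with the greatest care.
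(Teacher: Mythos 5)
First, a point of comparison that matters here: the paper does not prove this theorem at all --- it is quoted with a citation to Shevchenko \cite{SHEV96} (and to \cite{BARV96,BARVP99} for a similar fact) --- so your proposal can only be measured against the statement itself and against the use the paper makes of it. Your second paragraph is correct: the min-ratio pivot $\lambda = t_j/y_j$ with $j$ minimizing $t_i/y_i$ over $i \in J_+$ cleanly gives $cone(A) \subseteq \bigcup_{j \in J_+} cone(A[j,b])$, and this is precisely the technique the paper itself uses elsewhere (the ratio argument in the proof of Theorem~\ref{NotStrictDecomp}); it also yields the nontrivial half of Corollary~\ref{SplitCor}, which is the only instance of the present theorem the paper actually invokes later (the case $y \geq 0$, $J_- = \emptyset$).

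Everything after that is a genuine gap, and the ``delicate point'' you flag in your closing paragraph is not bookkeeping --- it is where the statement as printed (reading the evident typo $x_j$ as $y_j$) is simply false, so the steps you postponed cannot be carried out. Since $b$ is a column of every $A[j,b]$, the vector $b$ lies in every $cone(A[j,b])$ and hence always lies in the claimed right-hand side, yet when $y$ has a negative entry $b$ need not lie in $cone(A)$. Concretely, take $A = I_2$ and $b = (1,-1)^\top$, so $J_+ = \{1\}$, $J_- = \{2\}$; then $cone(A[1,b]) = \{x : x_1 \geq 0,\ x_1 + x_2 \geq 0\}$ and $cone(A[2,b]) = \{x : x_2 \leq 0,\ x_1 + x_2 \geq 0\}$, their intersection contains $b \notin cone(A)$, and conversely $(0,1) \in cone(A)$ lies in no cone of the $J_-$ union, so both inclusions fail. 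The identity you should be proving puts $-b$ on the $J_-$ side: $cone(A) = \bigl(\bigcup_{j \in J_-} cone(A[j,-b])\bigr) \cap \bigl(\bigcup_{j \in J_+} cone(A[j,b])\bigr)$ for $J_+, J_- \neq \emptyset$. With that correction your plan completes verbatim: the inclusion into the $J_-$ union is your own min-ratio argument applied to $-b = A(-y)$, whose positive support is exactly $J_-$ (this dissolves your sign obstruction, since the coefficient of $-b$ is $t_j/(-y_j) \geq 0$); and the cancellation you propose for the reverse inclusion succeeds only because the two representations now carry $b$ with opposite signs: from $x = \lambda(-b) + \sum_i s_i a_i$ and $x = \mu b + \sum_i t_i a_i$ the convex combination with weights $\mu/(\lambda+\mu)$ and $\lambda/(\lambda+\mu)$ annihilates $b$ and leaves all coefficients nonnegative (the case $\lambda = \mu = 0$ is trivial), whereas with $b$ on both sides both $b$-coefficients are nonnegative and nothing can cancel --- exactly the failure your sketch could not get past. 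An equivalent true variant keeping $b$ throughout is the union identity $\bigcup_{j \in J_+} cone(A[j,b]) = cone(A) \cup \bigcup_{j \in J_-} cone(A[j,b])$, which also specializes (with $J_- = \emptyset$) to Corollary~\ref{SplitCor} as the paper needs it.
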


\begin{definition}
A set of cones $\{C_1,C_2,\dots,C_s\}$ is called a \emph{decomposition of a cone} $C$, if $\bigcup\limits_{i = 1}^s C_i = C$. A decomposition is called \emph{strict}, if $dim(C_i \cap C_j) < dim(C)$ for any $i \not= j$.
\end{definition}

\begin{corollary}\label{SplitCor}
Let $A \in \mathbb{R}^{m \times n}$, $b \in \mathbb{R}^m$, and $b = A y$ for some $y \in \mathbb{R}^n_{+}$. Then $cone(A) = \bigcup\limits_{j \in J_+} cone(A[j,b])$ is a strict decomposition of $cone(A)$, where $A[j,b]$ is the matrix obtained from $A$ by replacing $j$-th column with the column $b$.
\end{corollary}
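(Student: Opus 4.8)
The plan is to prove the set equality by the two inclusions and then to verify strictness separately. Throughout I would assume $b\neq 0$, so that $y\neq 0$ and hence $J_+\neq\emptyset$ (if $b=0$ one may take $y=0$, $J_+=\emptyset$, and the statement is vacuous). Since $y\in\mathbb{R}^n_+$ we have $J_-=\emptyset$, so the signed formula of the preceding theorem collapses to a single union over $J_+$; rather than argue over the convention for the empty union $\bigcup_{j\in J_-}$, I would establish the equality directly, as the computation also sets up the bookkeeping needed for strictness.

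For the inclusion $\bigcup_{j\in J_+} cone(A[j,b])\subseteq cone(A)$, it suffices to note that every generating column of $A[j,b]$ lies in $cone(A)$: the columns $A_{*k}$ with $k\neq j$ trivially, and the replaced column $b=Ay$ because $y\geq 0$. Hence each $cone(A[j,b])\subseteq cone(A)$. For the reverse inclusion, fix $v\in cone(A)$ and a representation $v=At$ with $t\in\mathbb{R}^n_+$. Choose $j\in J_+$ minimizing the ratio $t_j/y_j$, set $\mu=t_j/y_j\geq 0$, and put $\lambda_k=t_k-\mu\,y_k$ for $k\neq j$. For $k\in J_+$ the choice of $j$ gives $t_k/y_k\geq t_j/y_j$, whence $\lambda_k\geq 0$; for $k\notin J_+$ one has $y_k=0$ and $\lambda_k=t_k\geq 0$. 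Substituting $b=Ay$ and simplifying, a direct computation yields $\mu\,b+\sum_{k\neq j}\lambda_k A_{*k}=\sum_k t_k A_{*k}=v$, so $v\in cone(A[j,b])$. This last check is a plain algebraic identity and uses no hypothesis on the columns of $A$, so the set equality $cone(A)=\bigcup_{j\in J_+}cone(A[j,b])$ holds in full generality.

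For strictness I would exploit the coefficient vector $t$ more carefully. When the columns of $A$ are linearly independent — the case of a simplicial cone, which is exactly the situation arising in unimodular cone decomposition — the vector $t$ is a linear function of $v$, and the computation above shows that $v\in cone(A[j,b])$ precisely when $j\in J_+$ attains $\min_{k\in J_+} t_k/y_k$. Consequently a point $v\in cone(A[i,b])\cap cone(A[j,b])$ with $i\neq j$ must satisfy $t_i/y_i=t_j/y_j$, i.e.\ the single linear relation $y_j\,t_i=y_i\,t_j$ in the variables $t$, which is a hyperplane condition in the column space of $A$. Therefore $dim\bigl(cone(A[i,b])\cap cone(A[j,b])\bigr)\leq r(A)-1<dim\,cone(A)$, giving the required strictness.

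The main obstacle is exactly this last step: strictness rests on the uniqueness of the conic representation $t$, hence on the linear independence of the columns of $A$. If the columns are dependent the representation is no longer unique, the min-ratio characterization can fail, and one of the pieces $cone(A[j,b])$ may even coincide with a full-dimensional subcone, so that the overlap is full-dimensional; thus some care is needed to confine the strictness claim to the simplicial setting in which the corollary is used. A coordinate-free alternative would be to show that distinct pieces have disjoint relative interiors and to invoke the elementary fact that two full-dimensional convex sets with disjoint relative interiors intersect in a set of dimension strictly less than $dim\,cone(A)$; this still ultimately relies on $b$ lying in the relative interior of the appropriate faces, i.e.\ on the same non-degeneracy.
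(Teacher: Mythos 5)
Your proof is correct, and it takes a genuinely different route from the paper, which offers no argument for this corollary at all: it is read off from the signed identity of the preceding theorem with $J_-=\emptyset$, which (even granting the convention for the empty union) yields only the set equality and is silent on strictness. Your direct min-ratio computation is essentially the same device the paper uses later to prove Theorem \ref{NotStrictDecomp} (where, incidentally, the extremal index must be the arg\,min rather than the arg\,max for the stated inequality chain to hold, exactly as you have it), and it buys the extra information that a strictness proof needs: $v=At$ lies in $cone(A[j,b])$ precisely when $j$ attains $\min_{k\in J_+}t_k/y_k$, provided the coefficient vector $t$ is unique. Your caveat about linear independence is not over-caution but a genuine correction to the statement as written: taking $A=(e_1,\,e_1,\,e_2)$ in $\mathbb{R}^2$ and $y=(1,0,1)^\top$, so that $b=e_1+e_2$ and $J_+=\{1,3\}$, one gets $cone(A[1,b])=\mathbb{R}^2_+=cone(A)$ while $cone(A[3,b])=cone(e_1,b)$, and their intersection is two-dimensional, so the decomposition is not strict. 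In the only setting where the paper actually invokes the corollary ($A$ square and nonsingular), $t=A^{-1}v$ is unique, the condition $y_jt_i=y_it_j$ with $y_i,y_j>0$ is a nontrivial hyperplane condition on $v$, and your argument closes the strictness claim.
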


\begin{lemma}\label{SplitLem}
For any $k$, the equality $det(A[k,b]) = y_k det(A)$ holds.
\end{lemma}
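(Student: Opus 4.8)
The plan is to derive this identity directly from the multilinearity of the determinant in its columns; it is essentially a restatement of Cramer's rule for the system $A y = b$. Since both $\det(A)$ and $\det(A[k,b])$ must be defined, I treat $A$ as square here, and I write $A_{*\,j}$ for its $j$-th column. The first step is simply to record that $b$ is a linear combination of the columns of $A$: from $b = A y$ we get $b = \sum_{j=1}^n y_j A_{*\,j}$.

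Next I would expand $\det(A[k,b])$ using that the determinant is a multilinear, alternating function of the columns. The matrix $A[k,b]$ coincides with $A$ in every column except the $k$-th, where the column $b$ sits. Substituting the expression for $b$ into that slot and pulling the sum out through multilinearity yields
$$\det(A[k,b]) = \sum_{j=1}^n y_j \det\bigl(A_{*\,1},\dots,A_{*\,k-1},A_{*\,j},A_{*\,k+1},\dots,A_{*\,n}\bigr).$$
Here the only column that varies with $j$ is the one occupying position $k$; all the other columns are fixed at their original values $A_{*\,1},\dots,A_{*\,k-1},A_{*\,k+1},\dots,A_{*\,n}$.

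The final step is to observe that for every $j \neq k$ the column $A_{*\,j}$ appears twice in the corresponding matrix — once in its native position $j$ and once in position $k$ — so, the determinant being alternating, each of these terms vanishes. Only the term $j = k$ survives, and in it the $k$-th column is restored to $A_{*\,k}$, so the matrix is precisely $A$; hence $\det(A[k,b]) = y_k\,\det(A)$. I do not expect a genuine obstacle: the only care needed is the bookkeeping of which column lands in slot $k$ and the appeal to the alternating property that kills the off-diagonal terms. The advantage of this argument over a bare invocation of Cramer's rule is that it holds verbatim even when $\det(A) = 0$, where both sides are simply zero.
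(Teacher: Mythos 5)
Your proof is correct and follows essentially the same route as the paper, which compresses the multilinearity-plus-alternating argument into the one-line chain $\det(A[k,b]) = \det(A[k,Ay]) = \det(A[k,\, y_k A_{*\,k}]) = y_k \det(A)$. Your version merely spells out the step the paper leaves implicit, namely that all terms $j \neq k$ in the expansion of the $k$-th column vanish because of a repeated column.
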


\begin{proof}
$det(A[k,b]) = det(A[k,Ay]) = det(A[k, y_k A_{*\,k}]) = y_k det(A)$. \qed
\end{proof}

Let $A \in \mathbb{Z}^{n \times n}$, $|det(A)| = \Delta > 0$. Let $A = PSQ$, where $S$ is the Smith normal form \cite{SCHR98} of $A$ and $P,Q$ are unimodular matrices. Let us consider the system:
\begin{equation}\label{ModSystem}
\begin{cases}
A x \equiv 0\,(mod\, S_{n\,n}) \\
0 \leq x \leq S_{n\,n} - 1 \\
x \in \mathbb{Z}^n \\
\end{cases}.
\end{equation}

All solutions of this system can be found with the following formula:

\begin{equation}\label{ModSystemSol}
x = Q^{-1} S^{-1} S_{n\,n} t \mod S_{n\,n}, \text{ where } t \in \mathbb{Z}^n.
\end{equation}

We can use solutions of the system (\ref{ModSystem}) to make a strict unimodular decomposition of $cone(A)$. To do that, we need to take some non-zero solution $x$ of the system (\ref{ModSystem}) and put $b = A \cfrac{x}{S_{n\,n}}$. Using Corollary \ref{SplitCor}, we can obtain a strict decomposition $cone(A) = \bigcup\limits_{j \in J_+} cone(A[j,b])$. By Lemma \ref{SplitLem}, we have that $|det(A[j,b])| = \cfrac{x_j}{S_{n\,n}} \Delta < \Delta$. Therefore, we may use a recursion
based on decreasing the determinants of appearing matrices. The total number of cones in the decomposition will be at most $n^{\Delta}$. 

The very efficient decomposition algorithm called \emph{signed decomposition} can be found in works \cite{BARV96,BARVP99}, but for our purposes we need decomposition with only positive signs.

\medskip We will follow an algorithm that was proposed by A. Y. Chirkov. The algorithm computes a non-strict unimodular decomposition of $cone(A)$ on at most $n^{2 \log_2(\Delta)}$ cones. The algorithm was mentioned in \cite{SHEV96}, but it was never published.

\begin{theorem}\label{NotStrictDecomp}
Let $A,B,C \in \mathbb{R}^{n \times n}$, $|det(A)| = \Delta > 0$, and $B = A C$, where $0 \leq C_{i\,j} \leq C_{i\,i}$ for any $i,j \in \{1,\dots,n\}$. Let $I = \{ i : C_{i\,i} > 0 \}$. If $I \not= \emptyset$, then $cone(A) = \bigcup\limits_{i \in I} cone(A[i,B_{*\,i}])$.
\end{theorem}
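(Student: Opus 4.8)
The plan is to prove the two inclusions separately, with essentially all of the content lying in the inclusion $cone(A) \subseteq \bigcup_{i \in I} cone(A[i, B_{*\,i}])$.

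First I would dispose of the easy inclusion. Since $B = AC$, the $i$-th column satisfies $B_{*\,i} = \sum_{k} C_{k\,i}\, A_{*\,k}$, and the hypothesis $C_{k\,i} \geq 0$ shows that $B_{*\,i} \in cone(A)$. As the remaining columns of $A[i, B_{*\,i}]$ are columns of $A$, we obtain $cone(A[i, B_{*\,i}]) \subseteq cone(A)$ for every $i$, and hence the union is contained in $cone(A)$.

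For the reverse inclusion I would fix an arbitrary $v \in cone(A)$ and write $v = A\lambda$ with $\lambda \in \mathbb{R}_+^n$; since $|det(A)| = \Delta > 0$, the matrix $A$ is invertible and $\lambda$ is unique. The goal is to produce an index $i \in I$ together with $\mu \in \mathbb{R}_+^n$ such that $v = A[i, B_{*\,i}]\,\mu$. Expanding $A[i, B_{*\,i}]\,\mu$ and comparing coefficients in the basis $\{A_{*\,k}\}$ (again using invertibility) yields the linear system $\mu_i C_{i\,i} = \lambda_i$ and $\mu_k + \mu_i C_{k\,i} = \lambda_k$ for $k \neq i$. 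For $i \in I$ this is uniquely solved by $\mu_i = \lambda_i / C_{i\,i}$ and $\mu_k = \lambda_k - (\lambda_i / C_{i\,i})\, C_{k\,i}$, so the whole problem reduces to choosing $i \in I$ making all of these $\mu_k$ nonnegative.

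The key step is the choice of $i$: I would take $i \in I$ minimizing the ratio $t_i := \lambda_i / C_{i\,i}$. Then $\mu_i = t_i \geq 0$ automatically, and for $k \neq i$ the nonnegativity of $\mu_k$ amounts to $\lambda_k \geq t_i\, C_{k\,i}$. Here the hypothesis $0 \leq C_{k\,i} \leq C_{k\,k}$ does the work: if $k \in I$ then $\lambda_k = t_k C_{k\,k} \geq t_i C_{k\,k} \geq t_i C_{k\,i}$ by minimality of $t_i$ together with the row dominance $C_{k\,i} \le C_{k\,k}$; while if $k \notin I$ then $C_{k\,k} = 0$ forces $C_{k\,i} = 0$, so the inequality reduces to $\lambda_k \geq 0$, which holds. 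This gives $\mu \in \mathbb{R}_+^n$ and hence $v \in cone(A[i, B_{*\,i}])$, completing the inclusion. I expect the main obstacle to be identifying the correct selection rule for $i$; once the minimizer of $\lambda_i / C_{i\,i}$ is guessed, the verification is a short computation driven entirely by the condition $C_{k\,i} \le C_{k\,k}$, the only subtlety being the separate treatment of indices $k$ lying outside $I$.
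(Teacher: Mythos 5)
Your proof is correct and follows essentially the same route as the paper: write $v=A\lambda$, pick an extremal ratio $\lambda_i/C_{i\,i}$ over $I$, and re-express $v$ in the columns of $A[i,B_{*\,i}]$ using $C_{k\,i}\le C_{k\,k}$. One substantive difference: the paper's proof selects $k$ \emph{maximizing} $t_k/C_{k\,k}$, whereas you select the \emph{minimizer}. The minimizer is the correct choice --- with the maximizer the paper's own chain $(t_j-t_kC_{j\,j}/C_{k\,k})\ge 0$ fails, since maximality gives $t_j/C_{j\,j}\le t_k/C_{k\,k}$, i.e.\ exactly the reverse inequality (e.g.\ $A=I$, $C=\left(\begin{smallmatrix}2&1\\1&2\end{smallmatrix}\right)$, $t=(3,1)^\top$ yields a negative coefficient under the max rule). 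So your version repairs what appears to be a sign error, or at least a typo, in the published argument. Your explicit handling of indices $k\notin I$ (where $C_{k\,k}=0$ forces $C_{k\,i}=0$) and your direct verification of the easy inclusion $cone(A[i,B_{*\,i}])\subseteq cone(A)$ are also cleaner than the paper's appeal to its Corollary~\ref{SplitCor}; otherwise the two arguments coincide.
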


\begin{proof}
By Corollary \ref{SplitCor}, we have $cone(A[j,B_{*\,j}]) \subseteq cone(A)$. Suppose that $x \in cone(A)$. Then $x = A t$ for some $t \in \mathbb{Z}_+^n$. Let $t_k/C_{k\,k} = \max \{t_j/C_{j\,j} :\: C_{j\,j} \not= 0,\, j \in \{1,\dots,n\} \}$. Then $x = t_k/C_{k\,k} B_{*\,k} + \sum\limits_{j \not= k} (t_j - t_k C_{j\,k}/C_{k\,k}) A_{*\,j}$. Since $(t_j - t_k C_{j\,k}/C_{k\,k}) \geq (t_j - t_k C_{j\,j}/C_{k\,k}) \geq 0$, we have that $x \in cone(A[k,B_{*\,k}])$ is true.\qed
\end{proof}

\begin{theorem}\label{UnimodDecomp}
Let $A \in \mathbb{Z}^{n\times n}$, $|det(A)| = \Delta > 0$. There is an algorithm to compute integer unimodular matrices $B^{(j)}$, where $j \in \{1,\dots,s\}$, such that $cone(A) = \bigcup\limits_{j=1}^s cone(B^{(j)})$. The value $s \leq n^{2 \log_2(\Delta)}$ is the total number of cones in the decomposition of $cone(A)$. The algorithm is polynomial for a fixed $\Delta$.
\end{theorem}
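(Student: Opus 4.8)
The plan is to prove the statement by a recursion that repeatedly splits $cone(A)$ into finitely many sub-cones whose generating matrices are integral and have strictly smaller absolute determinant, stopping once every generator is unimodular. The termination and the size bound both follow from a determinant potential: a matrix with $|det| = 1$ is unimodular and needs no further splitting, so the leaves of the recursion are exactly the desired matrices $B^{(j)}$, and it remains to bound how many leaves a root of determinant $\Delta$ can spawn.

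For a single reduction step I would start from $A$ with $|det(A)| = \Delta \ge 2$, compute its Smith normal form $A = PSQ$, and use the solutions of the modular system (\ref{ModSystem}) given explicitly by (\ref{ModSystemSol}). Each nonzero solution $x \in \{0,\dots,S_{n\,n}-1\}^n$ yields an integral vector $b = A\,x/S_{n\,n}$ lying in the half-open fundamental parallelepiped of $A$, and by Lemma \ref{SplitLem} the replacement matrices satisfy $det(A[j,b]) = (x_j/S_{n\,n})\,det(A)$, an integer strictly smaller than $\Delta$ in absolute value. Assembling several such solutions into the columns of a matrix $C$ with $0 \le C_{i\,j} \le C_{i\,i}$ and $B = AC$ integral, Theorem \ref{NotStrictDecomp} then gives $cone(A) = \bigcup_{i \in I} cone(A[i,B_{*\,i}])$ with $|I| \le n$, where each generator $A[i,B_{*\,i}]$ is integral with $|det| = C_{i\,i}\Delta < \Delta$. (When convenient one may instead split off a single column through Corollary \ref{SplitCor}, at the cost of more levels of recursion.) Recursing on each sub-cone produces the tree whose leaves are unimodular.

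The main obstacle, and the source of the exponent $2\log_2(\Delta)$, is the quantitative control of the determinant drop. If one could always build $C$ with every positive diagonal entry at most $1/2$, each level would at least halve the determinant and the branching factor $n$ would give only $n^{\log_2(\Delta)}$ cones; the honest bound is weaker because the drop cannot always be made this large in a single step. The hardest case is when $\Delta$ is prime: here $S_{n\,n} = \Delta$ and the solutions of (\ref{ModSystem}) form a one-dimensional family, so one step may reduce the determinant only from $\Delta$ to $\Delta-1$, after which the now composite (indeed even) value $\Delta-1$ can be halved at the next step. Carrying out this amortized analysis — showing that any two consecutive reductions decrease $|det|$ by at least a factor of $2$, equivalently that each step gains a factor of at least $\sqrt{2}$ — bounds the depth of the recursion by $2\log_2(\Delta)$, and with at most $n$ children per node this yields $s \le n^{2\log_2(\Delta)}$ leaves. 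Finally, the Smith normal form, the solution formula (\ref{ModSystemSol}), and the selection of a good split are all computable in time polynomial in the input size and in $\Delta$, and there are at most $\Delta$ solutions to inspect at each node; since $n^{2\log_2(\Delta)}$ is polynomial in $n$ for fixed $\Delta$, the whole algorithm runs in polynomial time for fixed $\Delta$.
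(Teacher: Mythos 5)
Your proposal sets up the right skeleton --- recursion on the determinant via the modular system (\ref{ModSystem}), Lemma \ref{SplitLem}, Corollary \ref{SplitCor} and Theorem \ref{NotStrictDecomp}, with a tree of branching factor at most $n$ whose depth you want to bound by $2\log_2(\Delta)$ --- and you correctly locate the difficulty in the quantitative control of the determinant drop. But the proof has a genuine gap exactly there: the claim that ``any two consecutive reductions decrease $|det|$ by at least a factor of $2$'' is asserted as something to be ``carried out,'' not proved, and it does not follow from the generic setup. An arbitrary nonzero solution $x$ of (\ref{ModSystem}) only guarantees child determinants $x_j\Delta/S_{n\,n}<\Delta$, which yields the trivial $n^{\Delta}$ bound mentioned in the paper, and nothing prevents a long chain of odd determinants each decreasing by $1$. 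The two constructions that actually secure the halving are the whole content of the paper's argument. For even $\Delta$: take a solution $q$ with an odd component (e.g.\ a column of $Q^{-1}$) and pass to $q'=\frac{S_{n\,n}}{2}q \bmod S_{n\,n}$, whose nonzero entries are all exactly $S_{n\,n}/2$, so every surviving child has determinant exactly $\Delta/2$ --- this is precisely the ``every positive diagonal entry at most $1/2$'' situation you say cannot always be arranged, and the point is that it \emph{can} be arranged whenever $\Delta$ is even. For odd $\Delta$: one builds the columns $y^{(k)}_i=\lambda_k x_i \bmod S_{n\,n}$ with $\lambda_k x_k\equiv-\gcd(S_{n\,n},x_k)\ (\mathrm{mod}\ S_{n\,n})$, which simultaneously (i) forces the diagonal-dominance condition $0\le C_{i\,k}\le C_{i\,i}$ required by Theorem \ref{NotStrictDecomp} and (ii) makes every child determinant equal to $S_{n\,n}-\gcd(S_{n\,n},x_k)$, an \emph{even} number, so that the next step halves. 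Without (i) you cannot even invoke Theorem \ref{NotStrictDecomp} for your assembled matrix $C$, and without (ii) the parity argument that caps the depth at $2\log_2(\Delta)$ has no basis.

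A secondary inaccuracy: you suggest ``inspecting'' the at most $\Delta$ solutions at each node to select a good split; the solution set of (\ref{ModSystem}) is in general much larger, and in any case the paper's algorithm does not search --- it writes down the required solutions in closed form from the Smith normal form, which is what makes each step polynomial. Your overall architecture is the paper's, so once the two case constructions above are supplied the argument closes; as written, the central estimate is assumed rather than established.
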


\begin{proof} Firstly, suppose that $\Delta$ is even. Let $q$ be a solution of the system \eqref{ModSystem}, such that at list one of the components of $q$ is odd. For example, using the formula \eqref{ModSystemSol}, we may put $q = (Q^{-1})_{*\,n}$. The column $(Q^{-1})_{*\,n}$ must have an odd component, because it is a column of the unimodular matrix $Q^{-1}$.

Let us consider the vector $q^\prime = \cfrac{S_{n\,n}}{2}\, q \mod S_{n\,n}$. If a component $q_i$ is even, then $q^\prime_i = 0$. If $q_i$ is odd, then $q^\prime_i = S_{n\,n}/2$. Hence, $q^\prime$ is the non-zero solution of the system \eqref{ModSystem} with the property $0 \leq q^\prime \leq S_{n\,n}/2$. So, we can use Corollary \ref{SplitCor} with the vector $b = A\cfrac{q^\prime}{S_{n\,n}}$ to obtain a decomposition $cone(A) = \bigcup\limits_{j \in J_+} cone(A[j,b])$. By the Lemma \ref{SplitLem}, if $q^\prime_j = 0$, then $|det(A[j,b])| = 0$ and if $q^\prime = S_{n\,n}/2$, then $|det(A[j,b])| = \Delta / 2$.

Suppose now that $\Delta$ is odd. Let $x$ be some non-zero solution of the system \eqref{ModSystem}. Let us consider the vectors $y^{(k)} \in \mathbb{Z}_+^n$ for $k \in \{1,\dots,n\}$. Let the components of $y^{(k)}$ be $y^{(k)}_i = \lambda_k x_i \mod S_{n\,n}$, where $\lambda_k$ is the solution of the equation $\lambda_k x_k \equiv - gcd(S_{n\,n},x_k)\: (mod\, S_{n\,n})$. Recall that $gcd(a,b)$ denotes the greatest common divisor of two naturals $a$ and $b$. Let us consider the matrix $C \in \mathbb{Q}^{n \times n}$, such that $C_{*\,k} = y^{(k)}/\Delta$. By the definition of $y_i^{(k)}$, we have $gcd(S_{n\,n},x_i) \mid y_i^{(k)}$. The maximal number $y_i^{(k)}$ of that type is $y_i^{(i)} = S_{n\,n} - gcd(S_{n\,n},x_i)$. Since $C_{i\,k} = y_i^{(k)}/\Delta$, we have $0 \leq C_{i\,k} \leq C_{i\,i}$ for any $i,k \in \{1,\dots,n\}$. Moreover, $|det(A[k,B_{*\,k}])| = y^{(k)}_k = S_{n\,n} - gcd(S_{n\,n},x_k)$, which is an even number. Therefore, we can use Theorem \ref{NotStrictDecomp} with the matrix $B = A C$ to obtain the decomposition $cone(A) = \bigcup\limits_{i \in I} cone(A[i,B_{*\,i}])$, where $I = \{i : C_{i\,i} > 0\}$. After this step all determinants of cones becomes even numbers, and we can make the decomposition step for the even case.

It is easy to see that the total number of steps is at most $2 \log_2(\Delta)$. Hence, the total number of unimodular cones in the decomposition is at most $n^{2 \log_2(\Delta)}$. Moreover, all subroutines of this algorithm are polynomial-time.\qed
\end{proof}

\begin{theorem}
Let $cone(A) = \bigcup\limits_{i = 1}^{s} cone(B^{(j)})$ be a decomposition of $cone(A)$ that was obtained after $k \geq 1$ steps of the algorithm. Let $b$ be some column of the matrix $B^{(j)}$ for some $j$ and $b = A t$ for some $t \in \mathbb{Q}^n_+$. Then $|t|_{\infty} < 2^{k-1}$.
\end{theorem}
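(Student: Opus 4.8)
The plan is to argue by induction on the number $k$ of decomposition steps, following a single chain $A = A^{(0)}, A^{(1)}, \dots, A^{(k)} = B^{(j)}$ in which $A^{(m)}$ is obtained from $A^{(m-1)}$ by one application of a step of the algorithm in Theorem \ref{UnimodDecomp}, and $b$ is a column of $A^{(k)}$. Writing $T^{(m)} = A^{-1} A^{(m)}$, the columns of $T^{(m)}$ are exactly the coordinate vectors $t$ of the columns of $A^{(m)}$ with respect to $A$. The first fact I would record is that $T^{(m)} \geq 0$ entrywise: every cone produced by the algorithm satisfies $cone(A^{(m)}) \subseteq cone(A)$, so each column of $A^{(m)}$ is a nonnegative combination of the columns of $A$, and since $A$ is nonsingular this combination is unique and equals the corresponding column of $T^{(m)}$. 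In particular $t \geq 0$, so $|t|_\infty = \max_i t_i$.

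The key quantity to control is not the sup-norm of an individual column of $T^{(m)}$ but the maximal row sum $R_m = \max_\ell \sum_i T^{(m)}_{\ell\,i}$ (which, by nonnegativity, is the induced matrix $\infty$-norm of $T^{(m)}$). I would show $R_0 = 1$ and $R_m \leq 2 R_{m-1}$, whence $R_m \leq 2^m$. To see the recursion, note that a single step replaces just one column, say column $p$ of $A^{(m-1)}$, by a new vector $b_m$, leaving the rest unchanged; so each row sum of $T^{(m)}$ equals the old row sum minus the nonnegative entry $T^{(m-1)}_{\ell\,p}$ plus the new entry $(A^{-1} b_m)_\ell$, and is therefore at most $R_{m-1} + (A^{-1} b_m)_\ell$. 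It remains to bound the entries of $A^{-1} b_m = T^{(m-1)} c$, where $c$ is the coordinate vector of $b_m$ with respect to the current matrix $A^{(m-1)}$. In the even step of Theorem \ref{UnimodDecomp} one has $c = q^\prime/S_{n\,n} \in \{0,1/2\}^n$, so $(T^{(m-1)} c)_\ell \leq \tfrac12 \sum_i T^{(m-1)}_{\ell\,i} \leq \tfrac12 R_{m-1}$; in the odd step $c = C_{*\,i}$ with $0 \leq C_{p\,i} \leq C_{i\,i} < 1$, so $(T^{(m-1)} c)_\ell \leq C_{i\,i} \sum_p T^{(m-1)}_{\ell\,p} < R_{m-1}$. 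In both cases the new row sum is at most $R_{m-1} + R_{m-1} = 2 R_{m-1}$ (indeed only $\tfrac32 R_{m-1}$ in the even case), which gives the claimed recursion.

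Finally I would read off the bound. Every column $b$ of $B^{(j)} = A^{(k)}$ is either an original column of $A$ (coordinate vector $e_i$, so $|t|_\infty = 1 \leq 2^{k-1}$) or a vector $b_m$ introduced at some step $m \in \{1,\dots,k\}$, whose coordinate vector $A^{-1} b_m = T^{(m-1)} c$ has, by the estimates above, entries at most $\tfrac12 R_{m-1}$ (even case) or strictly below $R_{m-1}$ (odd case); in either case $|t|_\infty \leq R_{m-1} \leq 2^{m-1} \leq 2^{k-1}$, and the inequality is strict for every column the algorithm actually creates. The step I expect to be the main obstacle is precisely the choice of invariant: a naive induction on the sup-norms of the columns of $T^{(m)}$ fails, because a new column is a combination of up to $n$ previous columns and its sup-norm could a priori grow by a factor of $n$ per step. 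The resolution is to track row sums instead and to exploit both the nonnegativity of $T^{(m)}$ and the fact that the mixing coefficients $c$ are bounded by $1/2$, respectively by $C_{i\,i} < 1$, relative to the current matrix $A^{(m-1)}$ --- which is exactly what caps the per-step growth at the factor $2$ responsible for the bound $2^{k-1}$.
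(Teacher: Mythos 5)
Your proof is correct, but it runs the induction in the opposite direction from the paper and uses a different invariant. The paper peels off the \emph{first} step: by induction the column $b$ has coordinate vector $y$ with $|y|_\infty < 2^{k-2}$ relative to the matrix $A^{(1)} = A[1,a^{(1)}]$ produced after one step, and since passing from the basis $A^{(1)}$ back to $A$ substitutes only the single column $a^{(1)} = Az$ with $|z|_\infty < 1$, each coordinate becomes $t_j = z_j y_1 + y_j \le y_1 + y_j < 2^{k-1}$ --- the doubling comes from summing just two coordinates of $y$. You instead peel off the \emph{last} step, and you are right that the naive bottom-up induction on column sup-norms of $T^{(m)} = A^{-1}A^{(m)}$ fails (a new column mixes up to $n$ old ones); your repair --- tracking the maximal row sum $R_m$ of the nonnegative matrix $T^{(m)}$, which satisfies $R_m \le R_{m-1} + \max_j c_j \cdot R_{m-1} \le 2R_{m-1}$ because only one column is swapped and the mixing coefficients satisfy $c_j \le 1/2$ (even case) or $c_j < 1$ (odd case) --- is a valid and arguably more transparent mechanism for the factor $2$, and it even yields the slightly sharper $\tfrac32 R_{m-1}$ in the even case. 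Two small points: your inequality ``$C_{p\,i} \le C_{i\,i}$'' should read $C_{j\,k} \le C_{j\,j}$ (the hypothesis of Theorem \ref{NotStrictDecomp} bounds each row by its diagonal entry), which is what you actually use since you need $\max_j c_j < 1$ for $c = C_{*\,k}$; and for a surviving original column of $A$ you only get $|t|_\infty = 1 \le 2^{k-1}$ rather than a strict inequality when $k=1$, but the paper's own base case has exactly the same feature, so this is not a defect of your argument relative to theirs.
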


\begin{proof}
Let us start the induction from $k = 1$ and consider the vector $t$ after the first step of the algorithm. If $b$ is a column of the initial matrix $A$, then  $|t|_{\infty} = 1$. If $b$ is a column that was added by the algorithm at the first step, then we have $|t|_{\infty} = 1/2$ if $\Delta$ is even and $|t|_{\infty} < 1$ if $\Delta$ is odd.

Let us consider the general case. Let $cone(A) = \bigcup\limits_{j=1}^m cone(A[j,a^{(j)}])$ be a decomposition after the first step of the algorithm, where $a^{(j)} \in \mathbb{Z}^n$ and $1 \leq m \leq n$. Without loss of generality, we can assume that $b \in cone(A[1,a^{(1)}])$. So, the column $b$ is obtained by applying the algorithm to $cone(A[1,a^{(1)}])$ using $k-1$ steps. Hence, there exists $y \in \mathbb{Q}^n_+$ such that $b = y_1 a^{(1)} + \sum\limits_{j=2}^n y_j A_{*\,j}$. We know that $a^{(1)} = A z$ for some $z \in \mathbb{Q}^n_+$ and $|z|_\infty < 1$. Hence, $b = At = z_1 y_1 A_{*\,1} + \sum\limits_{j=2}^n (z_j y_1 + y_j) A_{*\,j}$. By induction, we have $|y|_\infty < 2^{k-2}$.

Finaly, $t_j =
\begin{cases}
z_1 y_1 < y_1 < 2^{k-2}, \text{ for } j = 1 \\
z_j y_1 + y_j < y_1 + y_j < 2^{k-1}, \text{ for any } j \in \{2,\dots,n\}. \\
\end{cases}$\qed
\end{proof}

\begin{corollary}
Let $\{B^{(j)}:~j \in \{1,2,\dots,s\}\}$ be a unimodular decomposition from Theorem \ref{UnimodDecomp}. Let $b$ be the column of some $B^{(j)}$ and $b = A t$ for some $t \in \mathbb{Q}^n_+$. Then $|t|_{\infty} \leq \Delta^2$.
\end{corollary}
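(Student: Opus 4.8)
The plan is to obtain this corollary as an immediate consequence of the two preceding results, chaining the per-step bound on $|t|_\infty$ with the bound on the total number of steps performed by the algorithm. No new geometric or arithmetic idea is required; the corollary is a quantitative specialization of the previous theorem.

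First I would recall that the theorem immediately preceding this corollary guarantees the following: if $\mathrm{cone}(A) = \bigcup_{i=1}^s \mathrm{cone}(B^{(j)})$ is the decomposition obtained after $k \geq 1$ steps of the algorithm, and $b$ is a column of some $B^{(j)}$ with $b = A t$ for $t \in \mathbb{Q}^n_+$, then $|t|_\infty < 2^{k-1}$. Next I would invoke Theorem \ref{UnimodDecomp}, which asserts that the unimodular decomposition $\{B^{(j)} : j \in \{1,\dots,s\}\}$ is produced in at most $2\log_2(\Delta)$ steps. Consequently, any column $b$ of any $B^{(j)}$ appearing in the final decomposition is generated after some number of steps $k \leq 2\log_2(\Delta)$.

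It then remains only to substitute this bound on $k$ into the per-step estimate. Since $2^{\log_2(\Delta)} = \Delta$, I compute
\[
|t|_\infty < 2^{k-1} \leq 2^{2\log_2(\Delta)-1} = \frac{\Delta^2}{2} \leq \Delta^2,
\]
which is exactly the claimed inequality (in fact with a factor of $2$ to spare).

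I expect no genuine obstacle here, as the result is a direct composition of two already-proven bounds. The only point deserving a moment of care is to confirm that the hypothesis of the previous theorem applies verbatim to the columns of the \emph{final} cones $B^{(j)}$ of Theorem \ref{UnimodDecomp}, namely that each such column is indeed produced within the $2\log_2(\Delta)$-step budget and thus falls under the case $k \leq 2\log_2(\Delta)$; once this identification is made, the inequality follows by the elementary calculation above.
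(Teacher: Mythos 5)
Your proposal is correct and follows the paper's own one-line argument exactly: combine the per-step bound $|t|_\infty < 2^{k-1}$ from the preceding theorem with the fact that the algorithm terminates in at most $2\log_2(\Delta)$ steps, yielding $|t|_\infty < 2^{2\log_2(\Delta)-1} = \Delta^2/2 \leq \Delta^2$. The only difference is that you spell out the arithmetic, which the paper leaves implicit.
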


\begin{proof}
The corollary follows from Theorem 6 and the fact that the number of steps of the algorithm is at most $2\,log_2(\Delta)$.\qed
\end{proof}

\begin{theorem}\label{DecompositionComplexity}
The bit complexity of the algorithm from Theorem \ref{UnimodDecomp} is

$O(n^{\Theta + 2\,log_2(\Delta)} M(n\, log(n \alpha \Delta^2) ) )$, where $\Theta$ is the matrix multiplication exponent, $\Delta = |det(A)|$, $\alpha = \max\{|A_{i\,j}|\}$, and $M(t)$ is the complexity of multiplication of two $t$-bits integers.
\end{theorem}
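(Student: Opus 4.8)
The plan is to bound the total cost as the product of three factors: the number of cones processed by the recursion, the number of arithmetic operations spent at each cone, and the bit cost of a single such operation. First I would analyze the recursion tree of the algorithm from Theorem \ref{UnimodDecomp}. Each decomposition step replaces a cone $cone(A')$ by at most $n$ subcones of the form $cone(A'[j,b])$, so the tree has branching factor at most $n$; by Theorem \ref{UnimodDecomp} its depth is at most $2\log_2(\Delta)$. Hence the number of nodes is $\sum_{k=0}^{2\log_2(\Delta)} n^k = O(n^{2\log_2(\Delta)})$, and it suffices to bound the bit complexity incurred at a single node by $O(n^\Theta M(n\log(n\alpha\Delta^2)))$.

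Next I would control the sizes of all integers that appear. By the preceding Corollary, every column $b$ of every matrix $B^{(j)}$ in the decomposition satisfies $b = A t$ with $|t|_\infty \leq \Delta^2$; since $|A_{i\,j}| \leq \alpha$, this gives $|b|_\infty \leq n\alpha\Delta^2$. Therefore every $n \times n$ matrix handed to a node has entries of bit length $O(\log(n\alpha\Delta^2))$ and determinant of absolute value at most $\Delta$. The integers occurring inside the linear-algebra subroutines at a node --- the Smith normal form $S$ together with the unimodular transforms $P,Q$ and the column $(Q^{-1})_{*\,n}$, the splitting vector, and the product $B = AC$ --- are bounded by the Hadamard inequality applied to such a matrix, so they have bit length $O(n\log(n\alpha\Delta^2))$.

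I would then account for the arithmetic work at a node. The expensive subroutines are the Smith normal form computation with its transition matrices and the matrix multiplication $B = AC$; assembling the at most $n$ child matrices $A[j,b]$ is cheaper. Using fast matrix multiplication with exponent $\Theta$, and a Smith-normal-form algorithm whose operation count is of the same order, each node uses $O(n^\Theta)$ arithmetic operations on integers of bit length $O(n\log(n\alpha\Delta^2))$. Each such operation costs $O(M(n\log(n\alpha\Delta^2)))$ bit operations, so a node costs $O(n^\Theta M(n\log(n\alpha\Delta^2)))$, and multiplying by the node count $O(n^{2\log_2(\Delta)})$ yields the claimed bound.

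The main obstacle I anticipate is the second step: although the output matrices have small entries, the intermediate integers produced while computing the Smith normal form and the inverse $Q^{-1}$ can a priori blow up well beyond $n\alpha\Delta^2$. Pinning the bit length down to $O(n\log(n\alpha\Delta^2))$ requires invoking a Smith-normal-form procedure whose intermediate quantities stay within the Hadamard bound (for instance via modular or fraction-free techniques), rather than naive Gaussian elimination; once such a subroutine is fixed, the remaining estimates are routine.
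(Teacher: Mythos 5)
Your proposal is correct and follows essentially the same route as the paper: count the $O(n^{2\log_2\Delta})$ cones in the recursion, bound the entries of the matrices at each node by $n\alpha\Delta^2$ via the bound $|t|_\infty\leq 2^{k-1}\leq\Delta^2$, and charge each node the cost $O(n^\Theta M(n\log(n\alpha\Delta^2)))$ of Storjohann's Smith-normal-form algorithm, whose stated complexity already absorbs the intermediate-size issue you flag as the main obstacle. The paper merely phrases the final step as a sum of per-level costs $\sum_k O(n^{k-1}n^\Theta M(n\log(n\alpha 2^{k-1})))$ rather than a product of node count and per-node cost, which is the same estimate.
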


\begin{proof}
The most complex part of the algorithm during $k$-th step is computing of many Smith normal forms. Due to Storjohann \cite{STOR96,ZHEN05}, the Smith normal form computation complexity for $n \times n$ matrix $A$ is $O(n^{\Theta} M(n log\, \alpha) )$. Let $cone(A) = \bigcup\limits_{i=1}^{s} cone(B^{(i)})$ be a decomposition that was obtained after $k$ steps of the algorithm. We know that $s \leq n^{k}$ and $B^{(k)} = A T^{(k)}$, such that $0 \leq T^{(k)}_{i\,j} \leq 2^{k-1}$. Hence, $|B^{(k)}|_\infty \leq n \alpha 2^{k-1}$. So, the total cost of $k$-th step is $O(n^{k-1} n^\Theta M(n\, log(n \alpha 2^{k-1})))$. Finally, the total cost of the algorithm is $\sum\limits_{k=1}^{s} O(n^{k-1} n^\Theta M(n\, log(n \alpha 2^{k-1}))) = O(n^{\Theta + 2\,log_2(\Delta)} M(n\, log(n \alpha \Delta^2) ))$.\qed 
\end{proof}

\section{Computing the width of simplices}

\begin{lemma}\label{UnimodSectionLm}
Let $p \in \mathbb{Z}^n$, $B \in \mathbb{Z}^{n \times n}$, and $det(B) \not= 0$. Let $A \in \mathbb{Z}^{m \times n}$, $b \in \mathbb{Z}^n$, and for any $i \in \{1,\dots,m\}$ we have ${(A_{i\,*})}^\top \in cone({(B^{-1}})^\top)$. Then $P(A,b) \cap (p + cone(B)) \cap \mathbb{Z}^n \not= \emptyset$ if and only if $p \in P(A,b)$.
\end{lemma}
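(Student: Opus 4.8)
The plan is to reduce the entire statement to a single structural fact about the matrix product $AB$. The hypothesis that $(A_{i\,*})^\top \in cone((B^{-1})^\top)$ for every $i$ means that there is a vector $s_i \in \mathbb{R}^n_+$ with $(A_{i\,*})^\top = (B^{-1})^\top s_i$. Transposing this identity gives $A_{i\,*} = s_i^\top B^{-1}$, and therefore $A_{i\,*} B = s_i^\top B^{-1} B = s_i^\top \geq 0$. Hence the $i$-th row of $AB$ is nonnegative, and so $AB \geq 0$ entrywise. This transpose bookkeeping is the only place where the cone assumption is actually used, and it is the one step I expect to require genuine care; everything afterward is essentially automatic.

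Granting $AB \geq 0$, the key consequence is a monotonicity property of the constraint functionals over the translated cone. For any $t \in \mathbb{R}^n_+$, the point $x = p + Bt \in p + cone(B)$ satisfies $A x = A p + (AB) t \geq A p$, since $(AB) t \geq 0$. In words, each functional $A_{i\,*} x$ attains its minimum over the translated simplicial cone $p + cone(B)$ at the apex $x = p$. The whole lemma follows from this observation, so I would state and use it as the central step.

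For the implication $p \in P(A,b) \Rightarrow P(A,b) \cap (p + cone(B)) \cap \mathbb{Z}^n \neq \emptyset$, I would simply exhibit $p$ itself as a witness: since $p \in \mathbb{Z}^n$, $p \in p + cone(B)$ (take $t = 0$), and $A p \leq b$, the apex $p$ is an integer point of $P(A,b) \cap (p + cone(B))$. For the converse, I would take any $x \in P(A,b) \cap (p + cone(B)) \cap \mathbb{Z}^n$, write $x = p + Bt$ with $t \in \mathbb{R}^n_+$, and combine the monotonicity bound $A p \leq A x$ with the membership $A x \leq b$ to obtain $A p \leq b$, i.e. $p \in P(A,b)$. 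It is worth noting that integrality is needed only in the first direction, in order to produce $p$ as the witnessing lattice point; the converse argument holds verbatim for arbitrary real points, since it never uses that $x$ is integral.
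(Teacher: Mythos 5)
Your proof is correct and follows essentially the same route as the paper: both arguments rest on the identity $A_{i\,*} = s_i^\top B^{-1}$, which makes each constraint functional nondecreasing along $cone(B)$, so that the apex $p$ minimizes $Ax$ over $p+cone(B)$ (the paper phrases this as the contrapositive, showing $A_{i\,*}x > b_i$ whenever $A_{i\,*}p > b_i$). Your explicit treatment of the trivial direction, with $p$ itself as the integer witness, is a detail the paper leaves implicit but changes nothing of substance.
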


\begin{proof}
If $p \notin P(A,b)$, then there exists $i \in \{1,\dots,m\}$, such that $A_{i\,*} p > b_i$. For any $x \in (p + cone(B)) \cap \mathbb{Z}^n$, we have $A_{i\,*} x = A_{i\,*} p + A_{i\,*} c$, where $c \in cone(B)$. Since ${(A_{i\,*})}^\top \in cone({(B^{-1})}^\top)$, we have $A_{i\,*} = t^\top B^{-1}$ for some $t \in \mathbb{Q}^n_+$. Since $A_{i\,*} p > b_i$ and $B^{-1} c > 0$, we have $A_{i\,*} x > b_i$. \qed
\end{proof}

\begin{lemma}\label{PolyLm}
Let $C \in \mathbb{Z}^{n \times n}$, $p \in \mathbb{Q}^n$, $|det(C)| = \Delta > 0$. Let $A \in \mathbb{Z}^{m \times n}$, $b \in \mathbb{Z}^n$, $c \in \mathbb{Z}^n$, and for any $i \in \{1,\dots,m\}$ we have $({A_{i\,*})}^\top \in cone({(C^{-1})}^\top)$ and $c \in cone({(C^{-1})}^\top)$.
Then, for every fixed $\Delta$, there is a polynomial-time algorithm to solve the problem $\max \{c^\top x : x \in P(A,b) \cap (p + cone(C)) \cap \mathbb{Z}^n \}$. The algorithm's complexity is the same as the algorithm's complexity in Theorem \ref{DecompositionComplexity}.
\end{lemma}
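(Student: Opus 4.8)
The plan is to reduce the optimization over the non-unimodular cone $cone(C)$ to a controlled family of optimization problems over unimodular cones, on each of which the constraints and the objective become coordinate-wise monotone, and then to solve the resulting monotone problems. First I would invoke Theorem \ref{UnimodDecomp} applied to $C$, producing unimodular matrices $B^{(1)},\dots,B^{(s)}$ with $s \leq n^{2\log_2(\Delta)}$ and $cone(C) = \bigcup_{j=1}^s cone(B^{(j)})$, computed within the budget of Theorem \ref{DecompositionComplexity}. Since $p + cone(C) = \bigcup_j \bigl(p + cone(B^{(j)})\bigr)$, the optimal value equals $\max_j \max\{c^\top x : x \in P(A,b) \cap (p + cone(B^{(j)})) \cap \mathbb{Z}^n\}$, so it suffices to solve one subproblem per unimodular cone and take the best.

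Next I would transfer the hypotheses to each $B^{(j)}$. Since $cone(B^{(j)}) \subseteq cone(C)$ and the dual of a full-dimensional simplicial cone $cone(D)$ is $cone((D^{-1})^\top)$, dualization reverses inclusion and yields $cone((C^{-1})^\top) \subseteq cone(((B^{(j)})^{-1})^\top)$. Hence every row $(A_{i\,*})^\top$ and the vector $c$ lie in $cone(((B^{(j)})^{-1})^\top)$, which is precisely the hypothesis of Lemma \ref{UnimodSectionLm} for the cone $cone(B^{(j)})$; equivalently, $A B^{(j)} \geq 0$ and $(B^{(j)})^\top c \geq 0$ entrywise. I would then apply the unimodular change of variables $x = p + B^{(j)} w$. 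Because $B^{(j)}$ is unimodular, $x \in \mathbb{Z}^n$ iff $w$ lies in the shifted lattice $\mathbb{Z}^n - q$ with $q = (B^{(j)})^{-1} p$; writing $z = w + q \in \mathbb{Z}^n$, the subproblem becomes $\max\{((B^{(j)})^\top c)^\top z : M z \leq b,\ z \geq \lceil q \rceil,\ z \in \mathbb{Z}^n\}$ with $M = A B^{(j)} \geq 0$ and a nonnegative objective vector. Nonnegativity of $M$ together with the reversed dual-cone inclusion shows that every constraint is non-decreasing along the cone directions, so the feasible set is a bounded ``corner'' polytope and its LP relaxation is solvable in polynomial time.

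The remaining step, which I expect to be the main obstacle, is to solve this monotone integer program in polynomial time for fixed $\Delta$: with $M \geq 0$ and a nonnegative objective the LP optimum need not be integral, and rounding $w$ down keeps $M w \leq b$ feasible but sacrifices optimality. I would resolve this by combining monotonicity with the feasibility criterion of Lemma \ref{UnimodSectionLm}. For a candidate value $\gamma$, deciding whether the cone contains an integer point with $c^\top x \geq \gamma$ and $A x \leq b$ can, after translating the apex to an integer point of the cone, be recast as a feasibility question of the monotone type governed by that lemma, so that a binary search over $\gamma$ pins down the optimum. The range of $\gamma$ is controlled through the bound $|t|_\infty \leq \Delta^2$ from the corollary following Theorem 6, which bounds how deep into the cone any generator of $B^{(j)}$ reaches.

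The delicate point is to keep this per-cone routine genuinely polynomial rather than pseudo-polynomial, so that the total cost matches the decomposition cost of Theorem \ref{DecompositionComplexity}. This is exactly where unimodularity of $B^{(j)}$ must be used: the integer points of $p + cone(B^{(j)})$ form a single shifted copy of $\mathbb{Z}^n_{\geq 0}$, so that the monotone feasibility test at a fixed threshold reduces to checking a vertex of a \emph{unimodular} corner and rounding in the cone directions is exact. I would therefore center the argument on proving that, on each unimodular cone, the integer optimum is determined by an LP-computable active set together with this exact lattice rounding, and then sum the polynomially-many LPs over the $s \leq n^{2\log_2(\Delta)}$ cones to obtain the stated complexity.
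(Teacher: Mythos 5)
Your opening moves match the paper's proof exactly: decompose $cone(C)$ into $s \leq n^{2\log_2(\Delta)}$ unimodular cones via Theorem \ref{UnimodDecomp}, replace each shifted piece $p + cone(B^{(j)})$ by the cone over its \emph{integer} apex $x^{(j)}$ (so that $(p+cone(B^{(j)}))\cap\mathbb{Z}^n=(x^{(j)}+cone(B^{(j)}))\cap\mathbb{Z}^n$), and transfer the hypotheses on $A$ and $c$ to each $B^{(j)}$ via the reversal of inclusions under dualization. The divergence, and the gap, is in how you finish. In the paper there is no residual integer program to solve on a piece: since $c$ lies in the dual cone of every $cone(B^{(j)})$, the objective is monotone along the cone directions, so on each feasible piece the optimum is attained at the apex $x^{(j)}$ itself; combined with Lemma \ref{UnimodSectionLm} (the piece meets $P(A,b)$ in an integer point if and only if $x^{(j)}\in P(A,b)$), the whole algorithm is: compute the $s$ apexes, discard those outside $P(A,b)$, and return the best value $c^\top x^{(j)}$. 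You instead arrive at a genuine packing-type integer program $\max\{\tilde c^\top z : Mz\le b',\ z\ge 0,\ z\in\mathbb{Z}^n\}$ with $M\ge 0$ and $\tilde c\ge 0$ and propose to solve it by binary search on the optimal value $\gamma$. That plan fails at the oracle: the threshold constraint $c^\top x\ge\gamma$ has normal $-c$, which does \emph{not} lie in $cone(((B^{(j)})^{-1})^\top)$, so Lemma \ref{UnimodSectionLm} says nothing about the augmented system, and you supply no other feasibility test. Nor can the step be repaired by ``an LP-computable active set plus exact lattice rounding'': already $\max\{\tilde c^\top z : a^\top z\le\beta,\ z\in\mathbb{Z}_+^n\}$ with $a,\tilde c\ge 0$ is knapsack, so monotonicity over a unimodular cone is not by itself enough.

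There is a sign subtlety here that you half-detected. Read literally, $c\in cone((C^{-1})^\top)$ makes $c^\top x$ \emph{non-decreasing} into the cone (your computation $\tilde c=(B^{(j)})^\top c=t\ge 0$ is correct), which would push the optimum away from the apex; the paper's conclusion that $c^\top x^{(j)}$ is the maximum tacitly uses the opposite orientation (equivalently $-c$ in the dual cone, which is what actually holds where the lemma is applied in the last section, the LP optimum sitting at the apex). Whichever convention one fixes, the proof you need is the one-line apex argument under the orientation that makes the objective non-increasing along the cone directions; without that observation your per-cone subproblem is not solvable in polynomial time by the tools available in the paper.
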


\begin{proof}
Let, by Theorem \ref{UnimodDecomp}, $\bigcup\limits_{i = 1}^s cone(B^{(i)})$ be a unimodular decomposition of $cone(C)$, where $B^{(i)} \in \mathbb{Z}^{n \times n}$ are unimodular matrices and $s \leq n^{2 \log_2(\Delta)}$. Hence, $cone(B^{(i)}) = P(-{(B^{(i)})}^{-1}, 0)$ and $p + cone(C) = \bigcup\limits_{i = 1}^s P(-{(B^{(i)})}^{-1}, {-(B^{(i)})}^{-1} p)$. Moreover, $(p + cone(C)) \cap \mathbb{Z}^n = \bigcup\limits_{i = 1}^s P(-{(B^{(i)})}^{-1}, -\sigma^{(i)})$, where $\sigma^{(i)} = \lfloor {-(B^{(i)})}^{-1} p \rfloor$. Let $x^{(i)} = B^{(i)} \sigma^{(i)} \in \mathbb{Z}^n$. Then, we have $(p + cone(C)) \cap \mathbb{Z}^n = \bigcup\limits_{i = 1}^s (x^{(i)} + cone(B^{(i)}) )$.

Finally, we need to determine the feasibility of the sets $P(A,b) \cap (x^{(i)} + cone(B^{(i)})) \cap \mathbb{Z}^n$ for each $i \in \{1,\dots,s\}$. By Lemma \ref{UnimodSectionLm}, it can be done in polynomial time by checking whether $x^{(i)}$ belongs to $P(A,b)$. If $P(A,b) \cap (x^{(i)} + cone(B^{(i)})) \cap \mathbb{Z}^n$ is feasible, then $c^\top x^{(i)} = \max \{ c^\top x : P(A,b) \cap (x^{(i)} + cone(B^{(i)})) \cap \mathbb{Z}^n \}$, since $c \in cone({(C^{-1})}^\top)$. If every set is empty, then $P(A,b) \cap (p + cone(C)) \cap \mathbb{Z}^n$ is empty too.

So, our algorithm contains the unimodular decomposition step and $s$ checking steps. It is easy to see that the complexity of the algorithm
is the same as the complexity of the unimodular decomposition algorithm (see Theorem \ref{UnimodDecomp}). \qed
\end{proof}

\begin{corollary}\label{PolyCor}
Let $C \in \mathbb{Z}^{n \times n}$, $p \in \mathbb{Q}^n$, $|det(C)| = \Delta > 0$. Let $A \in \mathbb{Z}^{m \times n}$, $b \in \mathbb{Z}^n$, and for any $i \in \{1,\dots,m\}$ we have ${(A_{i\,*})}^\top \in cone({(C^{-1})}^\top)$. Then, if $\Delta$ is fixed, there is a polynomial-time algorithm for the feasibility problem in the set $P(A,b) \cap (p + cone(C)) \cap \mathbb{Z}^n$.
\end{corollary}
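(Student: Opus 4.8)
The plan is to obtain this statement as an immediate consequence of Lemma \ref{PolyLm}, since the feasibility question is simply the optimization problem of that lemma stripped of its objective. The first thing I would note is that the zero vector $c = 0$ is integral and belongs to $cone({(C^{-1})}^\top)$, because every cone contains the origin. Thus the hypotheses of Lemma \ref{PolyLm} are met verbatim with this choice of $c$, and running that algorithm already decides whether $P(A,b) \cap (p + cone(C)) \cap \mathbb{Z}^n$ is nonempty as a byproduct of its operation.

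More transparently, I would extract the feasibility-testing core of the proof of Lemma \ref{PolyLm} directly. That argument first applies Theorem \ref{UnimodDecomp} to produce a unimodular decomposition $cone(C) = \bigcup_{i=1}^s cone(B^{(i)})$ with $s \leq n^{2\log_2(\Delta)}$, and then rewrites the lattice set $(p + cone(C)) \cap \mathbb{Z}^n$ as the explicit finite union $\bigcup_{i=1}^s (x^{(i)} + cone(B^{(i)}))$, where $x^{(i)} = B^{(i)} \sigma^{(i)}$ and $\sigma^{(i)} = \lfloor {-(B^{(i)})}^{-1} p \rfloor$. Consequently, $P(A,b) \cap (p + cone(C)) \cap \mathbb{Z}^n$ is nonempty if and only if at least one set $P(A,b) \cap (x^{(i)} + cone(B^{(i)})) \cap \mathbb{Z}^n$ is nonempty. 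Here I would invoke Lemma \ref{UnimodSectionLm}, whose hypothesis ${(A_{i\,*})}^\top \in cone({(C^{-1})}^\top) \subseteq cone({((B^{(i)})^{-1})}^\top)$ is guaranteed by the assumption on the rows of $A$ together with $cone(B^{(i)}) \subseteq cone(C)$; the lemma reduces each feasibility test to the single membership check $x^{(i)} \in P(A,b)$, i.e. one evaluation of the linear inequalities $A x^{(i)} \leq b$.

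I expect no genuine obstacle here, the only point worth stressing is why the objective condition $c \in cone({(C^{-1})}^\top)$ present in Lemma \ref{PolyLm} can be dropped: that condition is needed there solely to certify that the maximum of $c^\top x$ over each feasible translated cone is attained at its apex $x^{(i)}$, and it is entirely irrelevant to the feasibility decision, which depends only on whether some apex lies in $P(A,b)$. The complexity is dominated by the unimodular decomposition of Theorem \ref{DecompositionComplexity} together with the $s \leq n^{2\log_2(\Delta)}$ membership checks, and is therefore polynomial for every fixed $\Delta$, matching the bound claimed in Lemma \ref{PolyLm}.
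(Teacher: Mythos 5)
Your proposal is correct and matches the paper's (implicit) reasoning: the paper states Corollary \ref{PolyCor} without proof precisely because it is the feasibility specialization of Lemma \ref{PolyLm}, which is what you establish both by taking $c=0$ and by extracting the apex-membership checks from that lemma's proof. Your side remark on why $c\in cone({(C^{-1})}^\top)$ can be dropped, and the duality observation that $cone(B^{(i)})\subseteq cone(C)$ yields $cone({(C^{-1})}^\top)\subseteq cone({((B^{(i)})^{-1})}^\top)$ so that Lemma \ref{UnimodSectionLm} applies, are both accurate.
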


\begin{theorem}
Let $A \in \mathbb{Z}^{(n+1) \times n}$, $b \in \mathbb{Z}^{n+1}$, and $P=P(A,b)$ be a $n$-dimensional simplex. If $\max\{\Delta_n(A\,b),\Delta_{n-1}(A)\}$ is fixed, then there is a polynomial-time algorithm to find the width and the flat direction of $P$. The algorithm's complexity is $O(T(n,\Delta_{n-1}(A)) n^3 \Delta(A\,b) \Delta(A) )=\tilde O(n^{3 + \Theta + 2 log_2\, \Delta_{n-1}(A)})$, where $\Theta$ is the matrix multiplication exponent and $T(n,r)$ denotes the complexity of the algorithm of Lemma 5 for square integral $n-1\times n-1$ matrices with the greatest common divisor of the determinants of all rank-submatrices, equal to r.
\end{theorem}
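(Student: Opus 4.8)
The plan is to reduce the width computation to a finite enumeration of optimization problems of the type solved in Lemma~\ref{PolyLm}. First I would recall that for an $n$-dimensional simplex $P = P(A,b)$ with $A \in \mathbb{Z}^{(n+1)\times n}$, the width is $w(P) = \min_{c \in \mathbb{Z}^n \setminus \{0\}} \big(\max_{x \in P} c^\top x - \min_{x \in P} c^\top x\big)$, and the key structural fact is that the optimal direction $c$ realizing the width lies inside the normal fan of $P$. Since $P$ has exactly $n+1$ vertices, each vertex $v$ has a normal cone $N(v) = cone(A_{J(v)\,*}^\top)$, and these $n+1$ cones tile $\mathbb{R}^n$. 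For a direction $c$, the value $\max_{x \in P} c^\top x$ is attained at the unique vertex whose normal cone contains $c$, and similarly $\min_{x\in P} c^\top x$ is attained at the vertex whose normal cone contains $-c$.

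\textbf{Enumerating cone pairs.}
The next step is to observe that the width problem decomposes over ordered pairs of vertices $(u,v)$: for each pair I want to minimize $c^\top u - c^\top v = c^\top(u-v)$ over all nonzero integer $c$ such that $c \in N(u)$ and $-c \in N(v)$, i.e. $c$ lies in the intersection $N(u) \cap \big(-N(v)\big)$. There are $(n+1)n$ such ordered pairs, so this enumeration is polynomial. Each subproblem is the integer optimization of a linear functional $c \mapsto (u-v)^\top c$ over the integer points of an intersection of two simplicial cones, which is exactly the geometry that Lemma~\ref{PolyLm} and Corollary~\ref{PolyCor} are built to handle once one sets $C$ from the generators of one of the cones. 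The rows defining the feasible region are the facet normals of $N(u)$ and $-N(v)$, and the relevant determinantal quantities are controlled by $\Delta_{n-1}(A)$ (for the cone generators, which come from $(n-1)\times(n-1)$ submatrices of $A$) and by $\Delta_n(A\,b)$ (for expressing the vertex differences $u-v$, whose coordinates are ratios of $n\times n$ minors of the augmented matrix $(A\,b)$).

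\textbf{Applying the decomposition machinery.}
Concretely, for a fixed pair $(u,v)$, the cone $N(u)$ is generated by a selection of $n$ of the rows of $A$, so I would take $C$ to be the matrix whose columns realize this cone with $|det(C)| \le \Delta_{n-1}(A)$, and arrange the defining inequalities so that both the constraint normals and the objective $u-v$ lie in $cone((C^{-1})^\top)$, matching the hypotheses of Lemma~\ref{PolyLm}. Since $\Delta_{n-1}(A)$ is fixed, the unimodular decomposition of Theorem~\ref{UnimodDecomp} produces at most $n^{2\log_2 \Delta_{n-1}(A)}$ unimodular cones, and each resulting subproblem is solved in polynomial time. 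Taking the minimum of the optimal values over all $O(n^2)$ vertex pairs yields $w(P)$, and the minimizing $c$ gives the flat direction. The complexity bound then follows by multiplying the $O(n^2)$ pair-enumeration and the cost $T(n,\Delta_{n-1}(A))$ of each call to the Lemma~\ref{PolyLm} algorithm, together with the bit-size factor $\Delta(A\,b)\Delta(A)$ coming from the magnitude of the vertex coordinates.

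\textbf{The main obstacle.}
The hard part will be verifying that the hypotheses of Lemma~\ref{PolyLm} are genuinely met for each pair, namely that one can orient the generators of $N(u)$ and of $-N(v)$ so that the constraint rows and the objective vector $u-v$ simultaneously lie in the dual cone $cone((C^{-1})^\top)$; the intersection $N(u) \cap (-N(v))$ is not itself simplicial in general, so it must be expressed as (or covered by) a controlled number of regions each fitting the single-cone framework of Lemma~\ref{PolyLm}. Bounding the minors of the combined constraint system by $\max\{\Delta_n(A\,b), \Delta_{n-1}(A)\}$, and ensuring the vertex differences $u-v$ have integer (or suitably scaled integer) coordinates with the claimed determinantal denominators, is the delicate bookkeeping that drives both correctness and the stated running time.
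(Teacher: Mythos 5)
Your first reduction --- tiling $\mathbb{R}^n$ by the normal cones of the $n+1$ vertices and rewriting the width as $\min_{v,u}\min_{c\in M(v,u)} c^\top(v-u)$ with $M(v,u)=N(v)\cap(-N(u))\cap\mathbb{Z}^n\setminus\{0\}$ --- is exactly the paper's first step. But the core of the algorithm is missing, and the step you propose in its place does not work. Lemma~\ref{PolyLm} optimizes a functional lying in the dual cone $cone((C^{-1})^\top)$ by evaluating it at the apex of each unimodular subcone; here the region $N(v)\cap(-N(u))$ is a cone with apex at the origin, the origin is excluded from $M(v,u)$, and the objective $v-u$ vanishes identically on the $(n-1)$-dimensional common face $cone(B)$ (where $B$ collects the $n-1$ facet normals shared by $v$ and $u$, so $B^\top(v-u)=0$). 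Hence the minimizer of $c^\top(v-u)$ is some nonzero lattice point close to that face, not the apex of any subcone, and no orientation of the generators makes the single-cone framework of Lemma~\ref{PolyLm} return it; this degeneracy is precisely why width computation is NP-hard in general. Relatedly, the generators of a full normal cone form an $n\times n$ submatrix of $A$, so the determinant you would feed into the decomposition is controlled by $\Delta_n(A)$, not by $\Delta_{n-1}(A)$ as you claim.

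The paper's missing ingredient is a slicing argument: since $c^\top(v-u)\ge 0$ on $M(v,u)$, one writes $M(v,u)=\bigcup_{k\ge 1}\bigl(M(v,u)\cap H(k)\bigr)$ with $H(k)=\{x:(v-u)^\top x=k\}$, and the minimum equals the smallest $k\in\{1,\dots,s\}$, $s=\min\{a_v^\top(v-u),\,-a_u^\top(v-u)\}$, for which the slice is nonempty. Each slice is shown to equal $(p_v(k)+cone(B))\cap(p_u(k)-cone(B))$, an intersection of two shifted $(n-1)$-dimensional simplicial cones; a unimodular change of variables sending $v-u$ to $(d,0,\dots,0)$ reduces its feasibility to Corollary~\ref{PolyCor} applied to an $(n-1)\times(n-1)$ matrix $B^\prime$ with $|det(B^\prime)|\le\Delta_{n-1}(A)$ --- this is where the $\Delta_{n-1}$ hypothesis actually enters. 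The factor $\Delta(A\,b)\Delta(A)$ in the running time is not a ``bit-size factor'': it bounds the number $s$ of slices that must be enumerated (after putting $A$ in Hermite normal form to control $||a_v||_\infty$ or $||a_u||_\infty$), which is why boundedness of $\Delta_n(A\,b)$ is part of the hypothesis. Without this layer-by-layer enumeration your outline has no mechanism that actually locates the minimizing $c$.
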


\begin{proof}
Since $P$ is a simplex, $\mathbb{R}^n = \bigcup\limits_{v \in vert(P)} N(v)$. Hence, $\mathbb{R}^n = \bigcup\limits_{v,u \in vert(P)} N(v) \cap (-N(u))$. Let $M(v,u) = N(v) \cap (-N(u)) \cap \mathbb{Z}^n \setminus \{0\}$. Using the previous equation, we have $width(P) =\min\limits_{v,u \in vert(P)} \min\limits_{c \in M(v,u)}
(\max\limits_{x \in P} c^\top x - \min\limits_{x \in P} c^\top x) =\min\limits_{v,u \in vert(P)} \min\limits_{c \in M(v,u)} c^\top (v-u)$. Hence, the problem to find the width and the flat direction of $P$ is equivalent to a family of $O(n^2)$ problems generated by all pairs of vertices of $P$.

Let us consider two different vertices $v,u$ of $P$ and consider the problem $\min\limits_{c \in M(v,u)}(c^\top(v - u))$.

Since $v,u$ are adjacent to the edge $v - u$, the vertices $v,u$ have $n-1$ common facets. Let a matrix $B^\top$ be induced by rows of the matrix $A$ that correspond to $n-1$ common facets. Hence, we can assume that $N(u) = cone(B \, a_u)$ and $N(u) = cone(B \, a_v)$, where $a_u^\top,a_v^\top$ are rows of the matrix $A$. Additionally, we have $-a_u \in N(v)$, $-a_v \in N(u)$ and, finally, $a_v,-a_u \in M(v,u)$.

Let us consider the hyperplane $H(k) = \{ x \in \mathbb{R}^n : (v - u) x = k \}$. Since $\forall c \in M(v,u)$ we have $ c^\top (v-u) \geq 0$, the equality  $M(v,u) = \bigcup\limits_{k \in \mathbb{Z}_+} (M(v,u) \cap H(k))$ is true. Hence, $\min\limits_{c \in M(u,v)}c^\top(u - v)=\min \{ k \in \{1,2,\dots,s\} : M(v,u) \cap H(k) \not= \emptyset \}$, where $s = \min \{a_v^\top (v-u), -a_u^\top(v-u)\}$.   The next lemma will help us to check the emptiness of the set $M(v,u) \cap H(k)$.

\begin{lemma}
Let $k \in \mathbb{R}_+$, then $N(v) \cap (-N(u)) \cap H(k) = (p_v(k) + cone(B)) \cap (p_u(k)-cone(B))$, where $p_v(k)$ is the intersection point of the ray $L_v = \{a_v t : t \in \mathbb{R}_+ \}$ with $H(k)$ and $p_u(k)$ is the intersection point of the ray $L_u = \{-a_u t : t \in \mathbb{R}_+ \}$ with $H(k)$.
\end{lemma}

\begin{proof}
Let $x \in N(v) \cap (-N(u)) \cap H(k)$, then $x = B \alpha + a_v t_v = -B \beta - a_u t_u$ for some $\alpha,\beta \in \mathbb{Q}_+^{n-1}$ and $t_v,t_u \in \mathbb{Q}_+$. Additionally, we have $x^\top (v-u) = k$. Since $B^\top v = B^\top u$, we have $t_v = k/a_v^\top (v-u)$ and $t_u = -k/a_u^\top (v-u)$. Let us consider the points $p_v(k)$ and $p_u(k)$. It is easy to see that $p_v(k) = a_v t_v$ and $p_u(k) = -a_u t_u$. Since $x = B \alpha + a_v t_v = -B \beta - a_u t_u$, we have that $x \in (p_v(k) + cone(B)) \cap (p_u(k)-cone(B))$.

Let $x \in (p_v(k) + cone(B)) \cap (p_u(k)-cone(B))$. Since $B^\top(v-u) = 0$, we have $x^\top (v-u) = p_v(k)^\top (v-u) = k$, and so $x \in H(k)$. Finally, $x \in N(v)$ and $x \in -N(v)$, because the points $p_v(k)$, $p_u(k)$ lie on the rays $L_v$, $L_u$. These rays are generating rays for the cones $N(v)$ and $-N(u)$. It finishes the proof of this lemma. \qed
\end{proof}

\begin{lemma}\label{LayerProbLm}
Let $k \in \mathbb{N}$. The emptiness of the set $M(v,u) \cap H(k)$ can be checked with a polynomial-time algorithm, if $\Delta_{gcd}(B)$ is fixed.
\end{lemma}

\begin{proof}
Let $d$ be the greatest common divisor of the components of $v-u$. So, we can find a unimodular matrix $Q \in \mathbb{Z}^{n \times n}$, such that $(v-u)Q = (d,0,\dots,0)$ and, hence, $Q_{1\,*} = (v-u)/d$. After the unimodular map $x \to Q x$ the cone $(p_v(k) + cone(B)) \cap (p_u(k)-cone(B))$ becomes the cone $(Q^{-1}p_v(k) + cone(Q^{-1}B)) \cap (Q^{-1}p_u(k)-cone(Q^{-1}B))$. Since $(v-u)p_v(k)/d = (v-u)p_u(k)/d = k/d$ and $(v-u)B = 0$, we have $Q^{-1} B = \dbinom{0}{B^\prime}$, $Q^{-1} p_v(k) = \dbinom{k/d}{p_v^\prime}$, and $Q^{-1} p_u(k) = \dbinom{k/d}{p_u^\prime}$ for some $B^\prime \in \mathbb{Z}^{(n-1) \times (n-1)}$, $p_v^\prime, p_u^\prime \in \mathbb{Z}^{n-1}$. Hence, $M(v,u) \cap H(k)$ is not empty if and only if the set $(p_v^\prime + cone(B^\prime)) \cap (p_u^\prime - cone(B^\prime))$ has at least one integer point and $d \mid k$. Let a system $R x \leq r$ be the dual representation of the cone $p_u^\prime - cone(B^\prime)$. In other words, $P(R,r) = p_u^\prime - cone(B^\prime)$. We can take $R = {(B^\prime)}^{-1}$ and $r = R p_u^\prime$ and after that make this matrices integral by multiplying by an appropriate integer value that has a polynomial size. Finally, our problem is the feasibility problem in the set $p_v^\prime + cone(B^\prime) \cap P(R,R) \cap \mathbb{Z}^{n-1}$. It is easy to see that this polytope satisfies the conditions of Corollary \ref{PolyCor}, moreover $\Delta(B^\prime) = \Delta_{gcd}(n-1, Q^{-1} B) = \Delta_{gcd}(n-1,B)$. So, the feasibility problem in the set $p_v^\prime + cone(B^\prime) \cap P(R,r) \cap \mathbb{Z}^{n-1}$ and the initial problem can be solved in polynomial time if $\Delta_{gcd}(n-1,B)$ is fixed. It finishes the proof of this lemma. \qed
\end{proof}

Let us estimate the complexity of the algorithm. For every two vertices $v,u$ of the simplex, the algorithm solves the problem $\min\limits_{c \in M(v,u)}c^\top(v - u)$. Every problem can be solved using the equality $\min\limits_{c \in M(v,u)}c^\top(v - u)= \min \{ k \in \{1,2,\dots,s\} : M(v,u) \cap H(k) \not= \emptyset \}$, where $s = \min \{a_v^\top (v-u), -a_u^\top(v-u)\}$. Every feasibility subproblem in the set $M(v,u) \cap H(k)$ can be solved using Lemma \ref{LayerProbLm}. Hence, the total complexity is $O(T(n,r) n^2 s)$, where $r = \max\{\Delta_{gcd}(n-1,B) : B \text{ is } (n-1) \times n \text{ submatrix of } A\}$. Let us estimate $s = \min \{a_v^\top (v-u), -a_u^\top(v-u)\}$. Due to Cramer's rule, for every vertex $x$ of the simplex we have $||x||_{\infty} \leq \Delta(A\,b) / \delta(A)$. Additionally, we have $||a_v||_\infty \leq \Delta_1(A)$ and $||-a_u||_\infty \leq \Delta_1(A)$. Hence, $s \leq 2 n \Delta(A\,b) \Delta_1(A)$. Totally, the complexity is $O(T(n,r) n^3 \Delta(A\,b) \Delta_1(A))$. Next, we can eliminate the $\Delta_1(A)$ multiplier. To do that, we can assume that the matrix $A$ of the simplex $P$ has already been transformed to the Hermite normal form \cite{SCHR98}. In other words, we can assume that
$A = \begin{pmatrix}
A_{1\,1} & 0 & 0 & \dots & 0 & 0 \\
A_{2\,1} & A_{2\,2} & 0 & \dots & 0 & 0 \\
\hdotsfor{6} \\
A_{n\,1} & A_{n\,2} & A_{n\,3} & \dots & A_{n\,n-1} & A_{n\,n} \\
A_{n+1\,1} & A_{n+1\,2} & A_{n+1\,3} & \dots & A_{n+1\,n-1} & A_{n+1\,n} \\
\end{pmatrix}$, where $0 \leq A_{i\,j} < A_{i\,i} < \Delta(A)$ for $1 \leq i,j \leq n$. Notice that these inequalities are false for the last row of $A$. Hence, one of the inequalities $||a_v||_\infty \leq \Delta(A)$ or $||-a_u||_\infty \leq \Delta(A)$ is true. Hence, we have that $s \leq 2 n \Delta(A\,b) \Delta(A)$. We also note that $r = \max\{\Delta_{gcd}(n-1,B) : B \text{ is } (n-1) \times n \text{ submatrix of } A\}$ is the invariant under any unimodular map that transforms the initial system to a system in the Hermite normal form. Since $ r \leq \Delta_{n-1}(A)$, we have the final formula $O(T(n,\Delta_{n-1}(A)) n^3 \Delta(A\,b) \Delta(A) )$ for the complexity. It finishes the proof
of the theorem. \qed

\end{proof}

Additionally, if for the simplex $P(A,b)$ we have $P(A,b) \cap \mathbb{Z}^n = \emptyset$, then we can use Theorem \ref{SimplexFlatTh} to bound its width. In this case, the algorithm's complexity can be reduced.

\begin{theorem}
Let $A \in \mathbb{Z}^{(n+1) \times n}$, $b \in \mathbb{Z}^{n+1}$, $P=P(A,b)$ be a $n$--dimensional simplex and $P \cap \mathbb{Z}^n = \emptyset$. Then, if $\max\{\delta_n(A),\Delta_{n-1}(A)\}$ is fixed, there is a polynomial-time algorithm to find the width and the flat direction of $P$. The algorithm's complexity is $O(T(n,\Delta_{n-1}(A)) n^2 \delta(A) )$ or $\tilde O(n^{3 + \Theta + 2 log_2\, \Delta_{n-1}(A)})$, where $\Theta$ is the matrix multiplication exponent.
\end{theorem}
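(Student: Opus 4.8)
The plan is to reuse the entire apparatus of the preceding theorem and change only the range over which the layer parameter $k$ is searched. First I would observe that $A \in \mathbb{Z}^{(n+1)\times n}$ has rank $n$, so $\delta(A) = \delta_n(A)$, and that the hypothesis $P \cap \mathbb{Z}^n = \emptyset$ together with Theorem \ref{SimplexFlatTh}, read in its contrapositive form, yields the a priori bound $w(P) \leq \delta(A)$. This single inequality powers the whole improvement.

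Exactly as before, I would write $\mathbb{R}^n = \bigcup_{v,u \in vert(P)} N(v) \cap (-N(u))$ and reduce the computation of $w(P)$ to the family of $O(n^2)$ subproblems $\min_{c \in M(v,u)} c^\top(v-u) = \min\{k \in \mathbb{N} : M(v,u) \cap H(k) \neq \emptyset\}$, one for each ordered pair of distinct vertices. For each pair I would scan $k = 1, 2, \dots$ and test the emptiness of $M(v,u) \cap H(k)$ via Lemma \ref{LayerProbLm}, stopping at the first nonempty layer. The key modification is to truncate this scan at $k = \delta(A)$: if no layer with $k \leq \delta(A)$ is nonempty, then that pair contributes a value exceeding $\delta(A)$ and can be discarded. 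Writing $w(P) = \min_{v,u} f(v,u)$ with $f(v,u)$ the index of the first nonempty layer, the bound $w(P) \leq \delta(A)$ guarantees that at least one pair attains its minimum inside the truncated range, so the global minimum over the truncated scans equals $w(P)$ exactly; the minimizing witness returned by Lemma \ref{LayerProbLm} at the optimal triple $(v,u,k)$ supplies the flat direction.

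For the complexity I would count $O(n^2)$ pairs, each requiring at most $\delta(A)$ emptiness tests, and each test costing $T(n,\Delta_{n-1}(A))$ by Lemma \ref{LayerProbLm}, noting as in the previous proof that the relevant $(n-1)$-minor invariant $\Delta_{gcd}(n-1,B) \leq \Delta_{n-1}(A)$ is unchanged by the unimodular normalization. This gives $O(T(n,\Delta_{n-1}(A))\, n^2\, \delta(A))$, which is polynomial once $\max\{\delta_n(A),\Delta_{n-1}(A)\}$ is fixed; substituting the decomposition cost $T(n,r) = \tilde O(n^{\Theta + 2\log_2 r})$ from Theorem \ref{DecompositionComplexity} and absorbing the fixed minors yields the stated $\tilde O$ estimate. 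I would also point out that, unlike the previous theorem, there is no longer any need to pass to the Hermite normal form to eliminate a $\Delta_1(A)$ factor: the search range is now controlled by $\delta(A)$ rather than by the magnitudes $a_v^\top(v-u)$, and this is precisely what removes one factor of $n$ and replaces $\Delta(A\,b)\Delta(A)$ by $\delta(A)$.

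The hard part, and the only genuinely new step, is the justification that truncating the layer search at $\delta(A)$ preserves the \emph{exact} value of $w(P)$ rather than merely an upper bound. For pairs whose first nonempty layer lies beyond $\delta(A)$ we never compute $f(v,u)$, so one must argue that every such pair is provably non-optimal; this is exactly the content of the flatness bound $w(P)\le\delta(A)$, and all remaining details are inherited verbatim from the preceding theorem.
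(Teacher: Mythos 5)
Your proposal is correct and follows essentially the same route as the paper: reduce to the $O(n^2)$ pairwise layer-search problems of the preceding theorem, and use the contrapositive of Theorem \ref{SimplexFlatTh} (since $P \cap \mathbb{Z}^n = \emptyset$, $w(P) \leq \delta(A)$) to truncate each search at $k = \delta(A)$, yielding the claimed $O(T(n,\Delta_{n-1}(A))\,n^2\,\delta(A))$ bound. Your explicit justification that truncated pairs whose first nonempty layer exceeds $\delta(A)$ are provably non-optimal is in fact slightly more careful than the paper's own phrasing, which asserts the per-pair equality without that caveat.
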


\begin{proof}
From the proof of the previous theorem we have that $\min\limits_{c \in M(v,u)}c^\top(v - u)= \min \{ k \in \{1,2,\dots,s\} : M(v,u) \cap H(k) \not= \emptyset \}$, where $s = \min \{a_v^\top (v-u), -a_u^\top(v-u)\}$. But, it is not necessary to enumerate all $k \in \{1,2,\dots,s\}$, since the width of $P$ is bounded. Since $P \cap \mathbb{Z}^n = \emptyset$, we can use Theorem \ref{SimplexFlatTh} and bound the width by $\delta_n(A) = \delta(A)$. Hence,
$\min\limits_{c \in M(v,u)}c^\top(v - u)= \min \{ k \in \{1,2,\dots,\delta_n(A)\} : M(v,u) \cap H(k) \not= \emptyset\}$.
Hence, we need to solve only $\delta_n(A)$ subproblems.  The remaining part of the proof is the same as in the previous theorem.\qed
\end{proof}

\section{On the complexity of the integer optimization in a simplex}

Let $P = P(A,b)$ be a simplex, where $A \in \mathbb{Z}^{(n+1) \times n}$ and $b \in \mathbb{Z}^{n+1}$. Due to Gomory's papers \cite{GOM65}, see also \cite{HU70}, the feasibility problem in the set $P \cap \mathbb{Z}^n$ can be reduced to the group minimization problem and can be solved in polynomial time if $\Delta(A)$ is fixed. Using Papadimitriou's  dynamic programming approach \cite{PAPA}, the problem $\max\{c^\top x : x \in P \cap \mathbb{Z}^n\}$ can also be solved in polynomial time if $\Delta_1(A \,b)$ and $\Delta(A)$ are fixed. To see this, we need to reduce the initial system $Ax\leq b$ with $n+1$ inequalities to a system $\begin{cases}
B y + C z\leq b^\prime \\
y_i \geq 0 \\
\end{cases}$, where the system $B y + C z\leq b^\prime$ has only $O(log_2 \, \Delta(A))$ inequalities. This reduction step can be done using the Hermite normal form \cite{SCHR98}.

In this work, we consider simplices that are generated by convex hulls of integer points and have bounded sub-determinants of a restrictions matrix. We note that the feasibility problem in such simplicies can be solved in polynomial time due to the integer points counting algorithm from \cite{BARV96,BARVP99}.

\begin{theorem}
Let $A \in \mathbb{Z}^{n \times (n+1)}$, $c \in \mathbb{Z}^n$, and $S = conv(A)$ be a $n$-dimensional simplex. If $\Delta(A)$ is fixed, then there is an algorithm quasi-polynomial on $n$ to solve the problem $\max\{c^\top x : x \in S\cap \mathbb{Z}^n \setminus vert(S)\}$. Additionally, let $B_i = \left( A_{*\,1} - A_{*\,i}, A_{*\,2} - A_{*\,i},\dots, A_{*\,n+1} - A_{*\,i} \right)$. Let  $\alpha = \max\limits_i\{|det(B_i)|\}$. If $\alpha$ is fixed, then there is a polynomial-time algorithm for the problem $\max\{c^\top x : x \in S\cap \mathbb{Z}^n \setminus vert(S)\}$.
\end{theorem}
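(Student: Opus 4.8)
The plan is to push the entire computation into a single translated simplicial cone, the tangent cone at the $c$-maximal vertex of $S$, and then to strip off the vertices by a lattice-point count. Write the columns of $A$ as the vertices $v_1,\dots,v_{n+1}$ of $S$. First I would solve the linear program $\max\{c^\top x : x\in S\}$ to locate the vertex $v^\ast$ attaining the maximum; since $S$ is a simplex, $S\subseteq v^\ast + cone(B)$, where $B$ is the integral matrix whose columns are the edge vectors $v_j-v^\ast$ ($j\neq v^\ast$), and in fact $S = (v^\ast+cone(B))\cap\{x : a_{opp}^\top x\le\beta\}$, where $a_{opp}^\top x\le\beta$ is the single facet inequality opposite to $v^\ast$. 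A direct computation gives $a_{opp}^\top(v_j-v^\ast)>0$ for every edge, so $a_{opp}\in cone((B^{-1})^\top)$; likewise, because $v^\ast$ maximizes $c^\top x$ we have $c^\top(v_j-v^\ast)\le 0$, hence $-c\in cone((B^{-1})^\top)$. Thus both normals describing the truncated simplex are admissible for $cone(B)$ in the sense required by Lemma \ref{PolyLm} and Corollary \ref{PolyCor}. The determinant of $B$ equals $\alpha=|det(B_i)|$, the normalized volume of $S$ (independent of $i$), and $\alpha\le (n+1)\Delta(A)$ by expanding the determinant of the $(n+1)\times(n+1)$ matrix obtained from $A$ by appending a row of ones.

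Second, I would invoke the unimodular cone decomposition of Theorem \ref{UnimodDecomp} on $cone(B)$, producing $s\le n^{2\log_2\alpha}$ unimodular cones; following the proof of Lemma \ref{PolyLm}, the lattice points of $v^\ast+cone(B)$ split into translated unimodular cones $x^{(k)}+cone(B^{(k)})$, over each of which an admissible functional is optimized at the apex $x^{(k)}$ and feasibility reduces to the apex-membership test of Lemma \ref{UnimodSectionLm}. This already solves $\max\{c^\top x : x\in S\cap\mathbb{Z}^n\}$; more importantly, the same decomposition, together with counting lattice points inside each piece (routine in a unimodular cone cut by two admissible halfspaces, as in the Barvinok counting algorithm cited above), lets me evaluate, for any threshold $\mu$, the number $N(\mu)$ of lattice points of $S$ with $c^\top x\ge\mu$, in time $n^{2\log_2\alpha}$ up to polynomial factors.

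The removal of $vert(S)$ is then immediate. Every vertex is a lattice point of $S$, so if $V(\mu)$ is the trivially computable number of vertices $v_i$ with $c^\top v_i\ge\mu$, then $S$ contains a \emph{non-vertex} lattice point with $c^\top x\ge\mu$ if and only if $N(\mu)>V(\mu)$. Consequently $\max\{c^\top x : x\in S\cap\mathbb{Z}^n\setminus vert(S)\}$ is the largest $\mu$ with $N(\mu)>V(\mu)$, which I would locate by binary search over the integer range of $c^\top x$ on $S$ (of polynomial bit-size, since the coordinates of the $v_i$ are part of the input). Each test is a single counting call, so the total cost is a logarithmic factor times the cost of one decomposition-and-count, and the empty lattice simplex case is detected automatically as $N\equiv V$.

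Finally, the two regimes fall out of the size $s\le n^{2\log_2\alpha}$ of the decomposition: for fixed $\alpha$ the whole algorithm is polynomial in $n$, whereas for fixed $\Delta(A)$ the bound $\alpha\le (n+1)\Delta(A)=O(n)$ gives $s\le n^{O(\log n)}$, i.e.\ a quasi-polynomial algorithm. The step I expect to be most delicate is making the counting/exclusion argument rigorous inside the prescribed determinant regime: one must verify that the bounded-minor hypothesis survives the passage to the tangent-cone representation at $v^\ast$, so that the counts $N(\mu)$ genuinely stay in the claimed complexity class, and one must reconcile the orientation conventions so that $-c$ and $a_{opp}$ are simultaneously admissible for $cone(B)$. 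I would handle the latter by phrasing the whole argument through the feasibility/counting form of Corollary \ref{PolyCor} rather than the maximization form of Lemma \ref{PolyLm}, since the two governing normals $-c$ and $a_{opp}$ then both lie in the single dual cone $cone((B^{-1})^\top)$.
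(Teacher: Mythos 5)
Your first and last steps coincide with the paper's: you pass to the tangent cone at the $c$-optimal vertex $v^\ast$, write $S=(v^\ast+cone(B))\cap H$, observe that the facet normal and $-c$ lie in $cone((B^{-1})^\top)$, bound $|det(B)|\le(n+1)\Delta(A)$ via the bordered determinant, and read off the two regimes from the size $n^{2\log_2|det(B)|}$ of the decomposition of Theorem \ref{UnimodDecomp}. (Your sign bookkeeping, putting $-c$ rather than $c$ into $cone((B^{-1})^\top)$, is in fact the orientation needed for the apex to be the maximizer in Lemma \ref{PolyLm}.) The genuine gap is the middle step, where you strip off $vert(S)$ by exact lattice-point counting. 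The reduction ``answer $=$ largest $\mu$ with $N(\mu)>V(\mu)$'' is sound, but $N(\mu)$ is not computable in the claimed time. First, the decomposition produced by Theorem \ref{UnimodDecomp} is \emph{non-strict}: the unimodular cones may overlap in full-dimensional sets, so per-piece counts cannot simply be added, and inclusion--exclusion over $s=n^{2\log_2\alpha}$ pieces is unaffordable; the paper deliberately forgoes Barvinok's signed decomposition, which is exactly the device that makes counting work. Second, even for a single piece, after the unimodular change of variables you must count $\{y\in\mathbb{Z}_{\ge 0}^{n}: g^{\top}y\le g_{0},\ h^{\top}y\le h_{0}\}$ with $g,h\ge 0$ in dimension $n$ that is \emph{not} fixed; this contains the \#P-hard problem of counting knapsack solutions, and Barvinok's algorithm is polynomial only in fixed dimension. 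So the binary search runs on an oracle you cannot implement within the claimed complexity class.

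The paper never counts: it stays inside the optimization form of Lemma \ref{PolyLm}, where a union of overlapping cones is harmless because the maximum over a union is the maximum of the per-piece maxima, each attained at the integral apex. To exclude the vertices in that framework (a point on which the published proof is itself terse, since with $p=v^\ast$ integral all apexes coincide with $v^\ast$), one argues inside each piece $v^\ast+cone(B^{(i)})$ that the objective is non-increasing and the facet constraint non-decreasing along the generators, so the best feasible \emph{non-apex} integer point lies among the points $v^\ast+B^{(i)}e_{j}$; since only $n+1$ points of the whole simplex are forbidden, a bounded second layer of such candidates per cone suffices. If you want to keep your counting formulation, you must weaken it to a ``more than $n+1$ lattice points above level $\mu$'' test realized by optimization --- at which point you have rejoined the paper's route.
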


\begin{proof}
Let us assume that the optimum of the linear problem $\max\{c^\top x : x \in S \}$ is reached on the vector $A_{*\,1}$. 

Consider the matrix $B_{1} = \left(A_{*\,2} - A_{*\,1}, A_{*\,3} - A_{*\,1},  \,\dots, A_{*\,n+1} - A_{*\,1} \right)$.
By definition, the columns of the matrix $B_1$ are edges of the simplex that are adjacent to the vertex $A_{*\,1}$.
There exists a halfspace $H = \{x \in \mathbb{R}^n : a^\top x \leq a_0 \}$, such that $P = (A_{*\,1} + cone(B_{1})) \cap H$. The inequality $a^\top x \leq a_0$ can be trivially found in polynomial time as the inequality corresponding to the facet opposite to the vertex $A_{*\,1}$. Since $S$ is a simplex, we have $a \in cone({(B_1^{-1})}^\top)$ and $c \in cone({(B_1^{-1})}^\top)$. Finally, $|det(B_1)| \leq (n+1) \Delta(A)$ and we can use the algorithm from Lemma \ref{PolyLm}. The complexity of the algorithm is $\tilde O(n^{\Theta + 2 log_2\,n + 2log_2 \,\Delta(A)})$. This gives us a quasi-polynomial algorithm's complexity bound.

Consider the case, when $\alpha$ is fixed. We know that $|det(B_1)| \leq \alpha$. By the same argument, the algorithm's complexity becomes equal to $\tilde O(n^{\Theta + 2 log_2 \,\alpha})$, i.e. becomes polynomial on $n$.\qed
\end{proof}

\section*{Acknowledgments}

The first author would like to thank Prof. P.M. Pardalos and his academic supervisor Prof. D.S. Malyshev.

\end{document}